\newtheorem{thm}{Theorem}[section]
\newtheorem{cor}[thm]{Corollary}
\newtheorem{prop}[thm]{Proposition}
\theoremstyle{definition}
\newtheorem{dfn}[thm]{Definition}
\newtheorem{lem}[thm]{Lemma}
\newtheorem{fact}[thm]{Fact}
\newtheorem{problem}[thm]{Problem}
\newtheorem{nota}[thm]{Notation}
\newtheorem{assumption}[thm]{Assumption}
\theoremstyle{remark}
\newtheorem{rem}[thm]{Remark}
\newtheorem{introdfn}{{\bf Definition} $(${\rm Definition} \ref{DefHeartEquiv}$)$}
\newtheorem{introproblem}{{\bf Problem} $(${\rm Problem} \ref{ProbOne}$)$}
\newtheorem{introthm}{{\bf Theorem} $(${\rm Theorem} \ref{MainThm}$)$}
\newcommand{\C}{\Ccal}               
\newcommand{\K}{\Kcal}               
\newcommand{\Cp}{\C^+}               
\newcommand{\Cm}{\C^-}               
\newcommand{\tp}{\tau^+}             
\newcommand{\tm}{\tau^-}             
\newcommand{\ST}{(\Scal,\Tcal)}      
\newcommand{\UV}{(\Ucal,\Vcal)}      
\newcommand{\STUV}{(\ST,\UV)}        
\newcommand{\CW}{\C/\Wcal}           
\newcommand{\CpW}{\Cp/\Wcal}         
\newcommand{\CmW}{\Cm/\Wcal}         
\newcommand{\HW}{\Hcal/\Wcal}        
\newcommand{\Hbar}{\ovl{H}}          
\newcommand{\Hom}{\mathrm{Hom}}          
\newcommand{\Cpp}{\C^{\prime +}}     
\newcommand{\Cmp}{\C^{\prime -}}     
\newcommand{\tpp}{\tau^{\prime+}}    
\newcommand{\tmp}{\tau^{\prime-}}    
\newcommand{\STp}{(\Scal\ppr,\Tcal\ppr)}      
\newcommand{\UVp}{(\Ucal\ppr,\Vcal\ppr)}      
\newcommand{\STUVp}{(\STp,\UVp)}              
\newcommand{\CWp}{\C\ppr/\Wcal\ppr}   
\newcommand{\HWp}{\Hcal\ppr/\Wcal\ppr}
\newcommand{\Hbarp}{\ovl{H}\ppr}         
\newcommand{\Exto}{\mathrm{Ext}^1}            
\newcommand{\Id}{\mathrm{Id}}         
\newcommand{\se}{\subseteq}           
\newcommand{\quot}{\mathrm{quot.}}    
\newcommand{\exact}{\mathrm{exact}}   
\newcommand{\ppr}{^{\prime}}          
\newcommand{\ppp}{^{\perp}}           
\newcommand{\co}{\colon}              
\newcommand{\ci}{\circ}               
\newcommand{\iv}{^{-1}}               
\newcommand{\lra}{\longrightarrow}    
\newcommand{\tc}{\Rightarrow}         
\newcommand{\hr}{\hookrightarrow}     
\newcommand{\LR}{\Leftrightarrow}     
\newcommand{\ze}{\zeta}         
\newcommand{\Ccal}{\mathcal{C}} 
\newcommand{\Dcal}{\mathcal{D}} 
\newcommand{\Hcal}{\mathcal{H}} 
\newcommand{\Ical}{\mathcal{I}} 
\newcommand{\Kcal}{\mathcal{K}} 
\newcommand{\Mcal}{\mathcal{M}} 
\newcommand{\Pcal}{\mathcal{P}} 
\newcommand{\Scal}{\mathcal{S}} 
\newcommand{\Tcal}{\mathcal{T}} 
\newcommand{\Ucal}{\mathcal{U}} 
\newcommand{\Vcal}{\mathcal{V}} 
\newcommand{\Wcal}{\mathcal{W}} 
\newcommand{\Sbf}{\mathbf{S}}   
\newcommand{\add}{\mathrm{add}}   
\newcommand{\und}{\underline}   
\newcommand{\ovl}{\overline}    
\newcommand{\ov}{\overset}      
\numberwithin{equation}{section}
\begin{document}

\title[Equivalence of hearts of twin cotorsion pairs]{Equivalence of hearts of twin cotorsion pairs on triangulated categories}

\author{Hiroyuki NAKAOKA}
\address{Research and Education Assembly, Science and Engineering Area, Research Field in Science, Kagoshima University, 1-21-35 Korimoto, Kagoshima, 890-0065 Japan\ /\ LAMFA, Universit\'{e} de Picardie-Jules Verne, 33 rue St Leu, 80039 Amiens Cedex1, France}

\email{nakaoka@sci.kagoshima-u.ac.jp}
\urladdr{http://www.lamfa.u-picardie.fr/nakaoka/}

\thanks{The author wishes to thank Professor Yann Palu for his interest and comments.}

\thanks{This work is supported by This work was supported by JSPS KAKENHI Grant Numbers 25800022,\, 24540085.}

\begin{abstract}
In this article, we investigate the condition for the hearts of twin cotorsion pairs to be equivalent, compatibly with the associated functors. This is related to the vanishing of components of pairs through the associated functors. 
\end{abstract}

\maketitle


\section{Introduction}

A cotorsion pair $\UV$, essentially equal to the notion of a torsion pair (\cite{IY}) on a triangulated category $\C$, is a unifying notion of $t$-structure (\cite{BBD}) and cluster tilting subcategory (\cite{KR}, \cite{KZ}).
Generalizing the case of $t$-structures and cluster tilting subcategories, a sequence of additive full subcategories $\Wcal\se\Hcal\se\Ccal$ is associated to any cotorsion pair, which gives an abelian category $\HW$ and a cohomological functor $H\co\C\to\HW$ (\cite{N1}, \cite{AN}). We call $\HW$ the {\it heart} of $\UV$.

In a work by Buan and Marsh (\cite{BM}), a generalization of (\cite{KZ}) has been given. From a rigid object in a triangulated category which is Krull-Schmidt $k$-linear $\Hom$-finite over a field $k$ equipped with a Serre functor, an integral preabelian category is constructed as an ideal quotient there.
This can be regarded as a {\it heart} of a pair of cotorsion pairs as in \cite{N2}.
We call a pair of cotorsion pairs $\Pcal=\STUV$ a {\it twin cotorsion pair} if it satisfies $\Exto(\Scal,\Vcal)=0$ (\cite{N2}). In fact, in the same manner as for the single cotorsion pair, a sequence of additive full subcategories $\Wcal\se\Hcal\se\Ccal$ is associated to any twin cotorsion pair, and it gives a preabelian category $\HW$ called {\it heart} and an additive functor $H\co\C\to\HW$.

Recently, Marsh and Palu (\cite{MP}) have established an equivalence of ideal quotients associated to rigid objects related by a mutation. In this article, to interpret this as an equivalence of hearts, we investigate a condition for the hearts to be equivalent.
Equivalences which we consider are the natural ones, namely, those compatible with associated functors as follows.
\begin{introdfn}
Let $\C$ and $\C\ppr$ be triangulated categories. Let $\Pcal=\STUV$ and $\Pcal\ppr=\STUVp$ be twin cotorsion pairs on $\C$ and $\C\ppr$, respectively. Let $F\co\C\ov{\simeq}{\lra}\C\ppr$ be a triangle equivalence. $\Pcal$ is said to be {\it heart-equivalent to $\Pcal\ppr$ along $F$} if there exists an equivalence of categories $E\co\HW\ov{\simeq}{\lra}\HWp$ which makes the following diagram commutative up to a natural isomorphism.
\[
\xy
(-10,7)*+{\C}="0";
(10,7)*+{\C\ppr}="2";
(-10,-7)*+{\HW}="4";
(10,-7)*+{\HWp}="6";
{\ar^{F}_{\simeq} "0";"2"};
{\ar_{H} "0";"4"};
{\ar^{H\ppr} "2";"6"};
{\ar_{E}^{\simeq} "4";"6"};
{\ar@{}|\circlearrowright "0";"6"};
\endxy
\]
\end{introdfn}
With this terminology, the problem we consider in this article is stated as follows.
\begin{introproblem}
Let $\C,\C\ppr,\Pcal,\Pcal\ppr$ and $F$ be as above.
Under which condition, does the following {\rm (I)} and {\rm (II)} become equivalent?
\begin{enumerate}
\item[{\rm (I)}] $\Pcal$ is heart-equivalent to $\Pcal\ppr$ along $F$.
\item[{\rm (II)}] The following conditions are satisfied.
\begin{itemize}
\item[{\rm (i)}] $H\ppr F(\Ucal)=H\ppr F(\Tcal)=0$.
\item[{\rm (ii)}] $HF\iv(\Ucal\ppr)=HF\iv(\Tcal\ppr)=0$.
\end{itemize}
\end{enumerate}
\end{introproblem}
It can be easily confirmed that {\rm (I)} always implies {\rm (II)}. Thus our goal is to find a condition, with which {\rm (II)} implies {\rm (I)}.
In our main theorem, this is given as follows.
\begin{introthm}
In the above situation, if the conditions 
\begin{itemize}
\item[{\rm (B)}] \ $F(\Scal)\se\Scal\ppr\ast\Vcal\ppr [1]$, \ $F(\Vcal)\se\Scal\ppr [-1]\ast\Vcal\ppr$,
\item[{\rm (C)}] \ $F\iv(\Scal\ppr)\se\Scal\ast\Vcal [1]$, \ $F\iv(\Vcal\ppr)\se\Scal [-1]\ast\Vcal$,
\end{itemize}
are satisfied, then the above {\rm (I)} and {\rm (II)} become equivalent.
\end{introthm}

In section 2, we review the definitions and results used in this article, mainly from \cite{N2}. In section 3, we reduce the problem to a manageable, equivalent one. In section 4 we introduce what happens in the case of single cotorsion pairs. Following the argument by Zhou and Zhu (\cite{ZZ}), conditions {\rm (I)} and {\rm (II)} are shown to be equivalent in this case, without any extra assumption. In section 5, for general twin cotorsion pairs, we give a sufficient condition for {\rm (II)} to imply {\rm (I)}. In section 6, we demonstrate how this condition can be applied to the equivalence given in \cite{MP}.

We also would like to remark that hearts of twin cotorsion pairs on exact categories and their equivalences have been studied by Liu in \cite{L1} and \cite{L2}.

\bigskip

For any category $\K$, we write abbreviately $X\in\K$, to indicate $X$ is an object of $\K$.
For any $X,Y\in\K$, let $\K(X,Y)$ denote the set of morphisms from $X$ to $Y$.
If $\K$ is additive, then for a subcategory $\Mcal\se\K$, we define its right perpendicular category $\Mcal\ppp$ to be the full subcategory of $\K$ consisting of those $X\in\K$ satisfying $\K(\Mcal,X)=0$. Dually, ${}\ppp\Mcal$ denotes the full subcategory of those $X$ satisfying $\K(X,\Mcal)=0$.
If $\Ical\subseteq\K$ is a full additive subcategory, then $\K/\Ical$ is defined to be the ideal quotient of $\K$ by $\Ical$. Namely, $\K/\Ical$ is an additive category defined as follows.
\begin{itemize}
\item[-] Objects in $\K/\Ical$ are the same as those in $\K$.
\item[-] For any $X,Y\in\K$, the morphism set is defined by
\[ (\K/\Ical)(X,Y)=\K(X,Y)/\{ f\in\K(X,Y) \mid f\ \text{factors through some}\ I\in\Ical\}. \]
\end{itemize}

\section{Review of Definitions and results}

Throughout this article, $\C$ denotes a triangulated category with suspension functor $[1]$.
\begin{dfn}
Let $\Ucal,\Vcal\se\C$ be a full subcategories closed under isomorphisms, finite direct sums and summands.
The pair $\UV$ is a {\it cotorsion pair on} $\C$ if it satisfies the following.
\begin{itemize}
\item[{\rm (i)}] $\C=\Ucal\ast\Vcal [1]$.
\item[{\rm (ii)}] $\Exto\UV=0$, where $\Exto(X,Y)=\C(X,Y[1])$ for any $X,Y\in\C$.
\end{itemize}
Here, $\Ucal\ast\Vcal [1]$ denotes the full subcategory of $\C$ consisting of those $C\in\C$ admitting a distinguished triangle
\[ U\rightarrow C\rightarrow V[1]\rightarrow U[1] \]
with $U\in\Ucal$ and $V\in\Vcal$.
\end{dfn}

\begin{dfn}
Let $\ST,\UV$ be two cotorsion pairs on $\C$. The pair $\Pcal=\STUV$ is a {\it twin cotorsion pair on} $\C$ if it satisfies $\Exto(\Scal,\Vcal)=0$. Note that this condition is equivalent to $\Scal\se\Ucal$, and also to $\Vcal\se\Tcal$.
\end{dfn}

Remark that a cotorsion pair $\UV$ on $\C$ can always be regarded as a twin cotorsion pair $(\UV,\UV)$. This is regarded as a {\it degenerated} case of a twin cotorsion pair, as follows.
\begin{dfn}
A twin cotorsion pair $\Pcal=\STUV$ is said to be {\it degenerated to a 
single cotorsion pair} if it satisfies
\[ \Scal=\Ucal\quad \text{and}\quad \Tcal=\Vcal. \]
\end{dfn}

\begin{dfn}\label{DefNotaW}
For a twin cotorsion pair $\Pcal=\STUV$ on $\C$, put
\begin{eqnarray*}
&\Wcal=\Ucal\cap\Tcal,&\\
&\Cp=\Wcal\ast\Vcal [1],\ \ \ \Cm=\Scal [-1]\ast\Wcal,&\\
&\Hcal=\Cp\cap\Cm.&
\end{eqnarray*}
We call the ideal quotient $\Hcal/\Wcal$ the {\it heart} of $\Pcal$. Remark that there are inclusions of full subcategories
\[
\xy
(-16,0)*+{\HW}="0";
(0,8)*+{\CpW}="2";
(0,-8)*+{\CmW}="4";
(16,0)*+{\CW}="6";
{\ar@{^(->} "0";"2"};
{\ar@{^(->} "0";"4"};
{\ar@{^(->} "2";"6"};
{\ar@{^(->} "4";"6"};
{\ar@{}|\circlearrowright "0";"6"};
\endxy.
\]
\end{dfn}

For any morphism $f\co A\to B$ in $\C$, its image under the quotient functor $\C\ov{\quot}{\lra}\CW$ is denoted by $\und{f}\co A\to B$.

\begin{lem}
$\Ucal\se\Cm$ and $\Tcal\se\Cp$ holds.
\end{lem}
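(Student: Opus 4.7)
The plan is to prove the two inclusions by symmetric arguments, building a triangle that exhibits the required decomposition for any $U\in\Ucal$ (respectively $T\in\Tcal$). I will treat $\Ucal\se\Cm$ in detail; the inclusion $\Tcal\se\Cp$ follows dually.

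Given $U\in\Ucal$, I want to produce a triangle of the form $S[-1]\to U\to W\to S$ with $S\in\Scal$ and $W\in\Wcal=\Ucal\cap\Tcal$. The starting point is the cotorsion pair $\ST$, which gives $\C=\Scal\ast\Tcal[1]$; shifting this decomposition by $[-1]$ (equivalently, applying it to $U[1]$ and rotating) yields a triangle
\[
S[-1]\lra U\lra T\lra S
\]
with $S\in\Scal$ and $T\in\Tcal$. The inclusion $U\in\Cm$ will follow once I show $T\in\Wcal$. Since $T\in\Tcal$ is already in hand, the only thing to verify is $T\in\Ucal$.

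Rotating the triangle above produces $U\to T\to S\to U[1]$, which presents $T$ as an extension of $S$ by $U$. Now $\Ucal$ is closed under extensions: from the cotorsion pair axioms $\C=\Ucal\ast\Vcal[1]$ and $\Exto(\Ucal,\Vcal)=0$, together with closure under direct summands, one checks in the standard way that $\Ucal=\{X\in\C\mid\Exto(X,\Vcal)=0\}$, so a long exact sequence argument applied to any triangle with outer terms in $\Ucal$ forces the middle term into $\Ucal$ as well. Since $\Pcal$ is a twin cotorsion pair we have $\Scal\se\Ucal$, so both $U$ and $S$ belong to $\Ucal$, whence $T\in\Ucal$. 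Combined with $T\in\Tcal$ this gives $T\in\Wcal$, and the triangle $S[-1]\to U\to T\to S$ witnesses $U\in\Scal[-1]\ast\Wcal=\Cm$.

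The inclusion $\Tcal\se\Cp$ is proved by the dual argument: for $T\in\Tcal$, apply $\C=\Ucal\ast\Vcal[1]$ to obtain a triangle $U\to T\to V[1]\to U[1]$ with $U\in\Ucal$, $V\in\Vcal$; rotate to exhibit $U$ as an extension of $T$ by $V\in\Vcal\se\Tcal$; use that $\Tcal=\{Y\in\C\mid\Exto(\Scal,Y)=0\}$ is closed under extensions to deduce $U\in\Tcal$, hence $U\in\Wcal$, so $T\in\Wcal\ast\Vcal[1]=\Cp$. The only nontrivial input is the closure of $\Ucal$ and $\Tcal$ under extensions, which is a routine consequence of the cotorsion pair axioms, so I do not anticipate any real obstacle.
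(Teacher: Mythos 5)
Your proof is correct, and it is exactly the standard argument the paper leaves implicit (the lemma's verification is omitted as ``confirmed easily''): decompose $U$ via $\C=\Scal\ast\Tcal[1]$, respectively $T$ via $\C=\Ucal\ast\Vcal[1]$, and use that $\Ucal$ and $\Tcal$ are the $\Exto$-orthogonals of $\Vcal$ and $\Scal$, hence extension-closed, together with $\Scal\se\Ucal$ and $\Vcal\se\Tcal$ to land the third term in $\Wcal$. No gaps; nothing further is needed.
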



\begin{lem}
$\Hcal\cap(\Ucal\ast\Tcal)=\Wcal$ holds.
\end{lem}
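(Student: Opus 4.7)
The inclusion $\Wcal \subseteq \Hcal \cap (\Ucal \ast \Tcal)$ is immediate: each $W \in \Wcal$ sits in the split triangle $W \ov{\id}{\lra} W \to 0 \to W[1]$, which simultaneously witnesses $W \in \Cp$, $W \in \Cm$, and $W \in \Ucal \ast \Tcal$ (taking the $\Vcal$-, $\Scal$-, and $\Tcal$-components to be zero). For the reverse inclusion, given $X \in \Hcal \cap (\Ucal \ast \Tcal)$, the plan is to fix three distinguished triangles: $(\dagger)\colon U \to X \to T \to U[1]$ with $U \in \Ucal$, $T \in \Tcal$, coming from $X \in \Ucal \ast \Tcal$; $(\ast)\colon W \to X \to V[1] \to W[1]$ with $W \in \Wcal$, $V \in \Vcal$, coming from $X \in \Cp$; and $(c)\colon S[-1] \to X \to W\ppr \to S$ with $S \in \Scal$, $W\ppr \in \Wcal$, coming from $X \in \Cm$.

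The crucial observation is a single factorization: the composite $S[-1] \to X \to T$ lies in $\C(S[-1], T) = \Exto(S, T)$, which vanishes by the $\ST$-cotorsion pair condition. Hence, by the universal property of the cofiber $W\ppr$ of $S[-1] \to X$ in $(c)$, the morphism $X \to T$ of $(\dagger)$ factors as $X \to W\ppr \to T$ for some map $W\ppr \to T$.

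With this in hand, I would apply the cohomological functor $\C(-, V[1])$ to $(\dagger)$. The vanishing $\C(U, V[1]) = \Exto(U, V) = 0$ makes the induced map $\C(T, V[1]) \to \C(X, V[1])$, given by precomposition with $X \to T$, surjective. But the factorization routes every such precomposition through $\C(W\ppr, V[1]) = 0$ (using $W\ppr \in \Wcal \subseteq \Ucal$), so $\C(X, V[1]) = 0$. In particular the second map of $(\ast)$ vanishes, so $(\ast)$ splits as $W \cong X \oplus V$, exhibiting $X$ as a direct summand of an object of $\Wcal$; summand-closure of $\Wcal$ then yields $X \in \Wcal$. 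The only creative step is spotting that $\Exto(\Scal, \Tcal) = 0$ forces $X \to T$ to factor through an object of $\Wcal$; after that, a standard long-exact-sequence computation and a splitting argument finish the proof with no serious obstacle.
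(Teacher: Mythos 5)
Your proof is correct. The paper itself offers no argument here (it says only ``These are confirmed easily''), so there is nothing to compare against; but your reasoning is sound and economical. The easy inclusion $\Wcal\se\Hcal\cap(\Ucal\ast\Tcal)$ via split triangles is fine. For the reverse inclusion, the three ingredients you use are exactly the right ones: $\C(S[-1],T)=\Exto(S,T)=0$ forces $X\to T$ to factor through $W\ppr\in\Wcal$; the exactness of $\C(T,V[1])\to\C(X,V[1])\to\C(U,V[1])=0$ then kills $\C(X,V[1])$; and the vanishing of the connecting map in $W\to X\to V[1]\to W[1]$ splits that triangle, exhibiting $X$ as a summand of $W\in\Wcal$. (An equally valid finish, symmetric in spirit, would be to show $\C(X,\Vcal[1])=0$ and $\C(\Scal[-1],X)=0$ separately, i.e.\ $X\in\Ucal\cap\Tcal$, using that $\Ucal$ and $\Tcal$ are the full perpendiculars to $\Vcal[1]$ and $\Scal[-1]$; your splitting trick just reaches $\Wcal$ in one step.)
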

\begin{proof}
These are confirmed easily.
\end{proof}

\begin{fact}
$($Corollary 3.8, 3.9 in \cite{N2}.$)$
\begin{enumerate}
\item Inclusion $\CpW\hr\CW$ has a left adjoint $\tp\co\CW\to\CpW$.
\item Inclusion $\CmW\hr\CW$ has a right adjoint $\tm\co\CW\to\CmW$.
\end{enumerate}
\end{fact}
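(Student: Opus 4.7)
Since the two statements are dual, I focus on (1), constructing the left adjoint $\tau^+$ of the inclusion $\CpW \hr \CW$ by hand. For each $C \in \C$ the cotorsion pair $\UV$ provides a distinguished triangle
\[ U_C \lra C \ov{\eta_C}{\lra} V_C[1] \lra U_C[1] \]
with $U_C \in \Ucal$ and $V_C \in \Vcal$. Since $0 \in \Wcal$, the triangle $0 \to V_C[1] \to V_C[1] \to 0$ exhibits $V_C[1] \in \Wcal \ast \Vcal[1] = \Cp$, so I set $\tau^+(C) := V_C[1]$ and take $\und{\eta_C}$ as the candidate unit.

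The main step is verifying the adjunction isomorphism: for every $X \in \Cp$ the composition-with-$\und{\eta_C}$ map
\[ (\eta_C)\uas \co \CpW(\tau^+(C), X) \lra \CW(C, X) \]
must be bijective. For surjectivity, take a representative $f \co C \to X$ and fix a triangle $W_X \to X \to V_X[1] \to W_X[1]$ realizing $X \in \Cp$. The composite $U_C \to C \ov{f}{\to} X \to V_X[1]$ lies in $\Exto(U_C, V_X)$, which vanishes by the cotorsion pair axiom for $\UV$; hence $f \ci (U_C \to C)$ factors as $U_C \to W_X \to X$ with $W_X \in \Wcal$. An octahedron argument (possibly enlarging $W_X$ within $\Wcal$) allows the factor $U_C \to W_X$ to be extended to a morphism $C \to W_X$, after which replacing $f$ by the difference produces a representative of $\und{f}$ whose composition with $U_C \to C$ vanishes strictly; the long exact $\C(-, X)$-sequence attached to $U_C \to C \to V_C[1]$ then furnishes the desired lift $V_C[1] \to X$. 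Injectivity is parallel: two lifts differ by a morphism factoring through the connecting map $V_C[1] \to U_C[1]$, and the same $\Exto$-vanishing combined with the decomposition of $X$ forces the difference to factor through $\Wcal$.

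Once $(\eta_C)\uas$ is known to be bijective for every $C$ and $X$, functoriality of $\tau^+$ on $\CW$ and naturality of $\und{\eta}$ are automatic from the uniqueness half of the universal property, completing (1). Statement (2) follows by the dual construction, using the decomposition $\C = \Scal \ast \Tcal[1]$ from the cotorsion pair $\ST$ in place of $\UV$ and taking the $\Scal$-component as the counit.

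The main obstacle I anticipate is the octahedral adjustment in the surjectivity step: securing an actual representative whose composition against $U_C$ is strictly zero, rather than zero only modulo $\Wcal$, requires combining the two cotorsion pair structures and is where $\Wcal = \Ucal \cap \Tcal$ actively enters the argument.
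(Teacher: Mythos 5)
There is a genuine gap, and it lies at the very start: you take $\tau^+(C)$ to be $V_C[1]$, the cone of the $\Ucal$-approximation $U_C\to C$, using only the single cotorsion pair $\UV$. The actual reflection onto $\CpW$ (this Fact is quoted from [N2], and the relevant construction is recorded in Remark \ref{RemExistRef} of the paper) is the object $Z_C$ of a \emph{reflection triangle} $S_C[-1]\to C\ov{z_C}{\lra}Z_C\to S_C$, i.e.\ the cone of a map from $\Scal[-1]$ which factors through $\Ucal$: one decomposes $C$ by $\UV$, then decomposes $U_C$ by $\ST$, and the octahedron gives $T_C\in\Tcal\cap(\Ucal\ast\Scal)=\Wcal$, whence $Z_C\in\Cp$. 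Your $V_C[1]$ differs from $Z_C$ by the $\Wcal$-extension $T_C\to Z_C\to V_C[1]\to T_C[1]$, and such a morphism is not invertible in the ideal quotient $\CW$, so $V_C[1]$ does not have the universal property in general. Accordingly both of your verification steps fail. In the surjectivity step, extending the factorization $U_C\to W_X$ to a morphism $C\to W_X$ has obstruction the composite $V_C\to U_C\to W_X$ in $\C(\Vcal,\Wcal)$, for which no vanishing is available; no octahedral adjustment removes it. In the injectivity step, a morphism $g\co V_C[1]\to X$ killed by $\eta_C$ factors through the cone $U_C[1]$, and the composite $U_C[1]\to X\to V_X[1]$ lies in $\C(U_C[1],V_X[1])\cong\C(U_C,V_X)$, which need not vanish even modulo $\Wcal$. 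What makes the genuine reflection work is that a morphism $Z_C\to X$ killed by $z_C$ factors through $S_C\in\Scal$, and then $\C(\Scal,V_X[1])=\Exto(\Scal,\Vcal)=0$ --- the \emph{twin} condition --- forces it to factor through $W_X$. Your argument for (1) never uses $\Scal$ or $\Exto(\Scal,\Vcal)=0$ at all, which is a structural sign it cannot succeed; the same mismatch occurs in your dual construction for (2), where the coreflection must be the co-cone of a map $C\to V[1]$, $V\in\Vcal$, factoring through $\Tcal$, not the $\Scal$-component of an $\ST$-decomposition.

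A small counterexample to the bijectivity you claim: let $\C$ be the cluster category of type $A_1$, with indecomposables $A$ and $B=A[1]$ satisfying $\C(A,B)=\C(B,A)=0$, and take the twin cotorsion pair $((0,\C),(\add A,\add A))$, so that $\Wcal=\add A$, $\Cp=\C$ and $\tau^+\cong\Id$. For $C=B$ the triangle $A\ov{0}{\lra}B\lra B\oplus B\lra A[1]$ is a legitimate $\UV$-decomposition, but the candidate unit $B\to B\oplus B$ is not a reflection: precomposition $(\CW)(B\oplus B,B)\cong k^{2}\to(\CW)(B,B)\cong k$ is not injective. So your construction is not even well defined up to isomorphism in $\CW$ independently of the chosen decomposition, and the claimed adjunction isomorphism fails for admissible choices. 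The repair is exactly the paper's reflection triangle: replace the cone of $U_C\to C$ by $Z_C$, and run your two steps using the cone $S_C\in\Scal$ together with $\Exto(\Ucal,\Vcal)=0$ and $\Exto(\Scal,\Vcal)=0$.
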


\begin{dfn}
Let $C\in\C$ be any object.
\begin{enumerate}
\item A diagram
\begin{equation}\label{ReflTri}
\xy
(-28,0)*+{S[-1]}="0";
(-8,0)*+{C}="2";
(8,0)*+{Z_C}="4";
(24,0)*+{S}="6";
(-18,-12)*+{U}="8";
(-18,-5)*+{_{\circlearrowright}}="10";
{\ar^{} "0";"2"};
{\ar^{z_C} "2";"4"};
{\ar_{} "4";"6"};
{\ar_{} "0";"8"};
{\ar_{} "8";"2"};
\endxy
\end{equation}
is called a {\it reflection triangle} if it satisfies the following.
\begin{itemize}
\item[{\rm (i)}] $S\in\Scal, U\in\Ucal$.
\item[{\rm (ii)}] $S[-1]\to C\ov{z_C}{\lra}Z_C\to S$ is a distinguished triangle in $\C$.
\item[{\rm (iii)}] $Z_C\in\Cp$.
\end{itemize}


\item Dually, a diagram
\begin{equation}\label{CoreflTri}
\xy
(-24,0)*+{V}="0";
(-8,0)*+{K_C}="2";
(8,0)*+{C}="4";
(28,0)*+{V[1]}="6";
(18,-12)*+{T}="8";
(18,-5)*+{_{\circlearrowright}}="10";
{\ar^{} "0";"2"};
{\ar^{k_C} "2";"4"};
{\ar^{} "4";"6"};
{\ar_{} "4";"8"};
{\ar_{} "8";"6"};
\endxy
\end{equation}
is called a {\it coreflection triangle} if it satisfies the following.
\begin{itemize}
\item[{\rm (i)}] $V\in\Vcal, T\in\Tcal$.
\item[{\rm (ii)}] $V\to K_C\ov{k_C}{\lra}C\to V[1]$ is a distinguished triangle in $\C$.
\item[{\rm (iii)}] $K_C\in\Cm$.
\end{itemize}
\end{enumerate}
\end{dfn}

\begin{fact}$($Corollary 3.8 in \cite{N2}.$)$
For any reflection triangle $(\ref{ReflTri})$, there exists a unique morphism $\ze\in(\CW)(\tp(C),Z_C)$ which is compatible with the adjunction $C\to\tp(C)$ as follows.
\[
\xy
(-10,6)*+{C}="0";
(10,6)*+{Z_C}="2";
(0,-8)*+{\tp(C)}="4";
(0,10)*+{}="5";
{\ar^{\und{z}_C} "0";"2"};
{\ar_(0.4){\mathrm{adjunction}} "0";"4"};
{\ar^(0.4){\ze} "2";"4"};
{\ar@{}|\circlearrowright "4";"5"};
\endxy
\]
This $\ze$ is an isomorphism in $\CW$.

Dually for coreflection triangles.
\end{fact}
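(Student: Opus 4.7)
The plan is to derive the existence and uniqueness of $\ze$ from the universal property of the reflection $\tp$, and then to prove that $\ze$ is an isomorphism by showing that $(Z_C,\und{z}_C)$ itself satisfies the universal property of a reflection of $C$ into $\CpW$. By uniqueness of representing objects this forces $\ze$ to be invertible in $\CW$.

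For existence and uniqueness, observe that $Z_C\in\Cp$ yields $Z_C\in\CpW$, so by the preceding Fact the unit $\eta_C\co C\to\tp(C)$ of the adjunction is universal among morphisms in $\CW$ from $C$ to objects of $\CpW$. Applying this property to $\und{z}_C\co C\to Z_C$ produces a unique $\ze\co\tp(C)\to Z_C$ in $\CW$ with $\ze\ci\eta_C=\und{z}_C$, which is precisely the required compatibility.

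To show $\ze$ is invertible, I will verify that for every $Y\in\CpW$ precomposition with $\und{z}_C$ is a bijection $(\CW)(Z_C,Y)\to(\CW)(C,Y)$, so that $\und{z}_C$ becomes a second universal arrow and $\ze$ is forced to be an isomorphism. For surjectivity, given $\und{f}\co C\to Y$ with representative $f$, analyze $f\ci a\co S[-1]\to Y$, where $a\co S[-1]\to C$ is the first morphism of the reflection triangle. Using the factorization $a=u\ci s$ through $U\in\Ucal$ supplied by the diagram, rewrite this as $f\ci u\ci s$. Since $Y\in\Wcal\ast\Vcal[1]$, fix a triangle $W_Y\to Y\to V_Y[1]\to W_Y[1]$ with $W_Y\in\Wcal$ and $V_Y\in\Vcal$; then the composite $U\to Y\to V_Y[1]$ lies in $\Exto(\Ucal,\Vcal)=0$, so $f\ci u$ factors through $W_Y\in\Wcal$. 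An octahedral argument on $a=u\ci s$ combined with this splitting will let me modify $f$ by a morphism factoring through $\Wcal$ to obtain $f\ppr\co C\to Y$ with $f\ppr\ci a=0$ in $\C$; such an $f\ppr$ lifts along $z_C$ to $g\co Z_C\to Y$, and $\und{g}\ci\und{z}_C=\und{f\ppr}=\und{f}$ in $\CW$.

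For injectivity, if $g\co Z_C\to Y$ has $g\ci z_C$ factoring through $\Wcal$, the same modification trick reduces to the case $g\ci z_C=0$ in $\C$, in which $g$ factors as $Z_C\to S\to Y$ through some $h\co S\to Y$. The composite $S\to Y\to V_Y[1]$ lies in $\Exto(\Scal,\Vcal)=0$ by the defining condition of a twin cotorsion pair, so $h$ (and hence $g$) factors through $W_Y\in\Wcal$, giving $\und{g}=0$. The main obstacle I expect is the modification step: extending the obstructing map $S[-1]\to W_Y$ across the reflection triangle so that the obstruction in $\C$ (not merely in $\CW$) actually vanishes. This is where the octahedral axiom applied to $a=u\ci s$, together with the cotorsion-pair decomposition of $Y$ and of $\Ucal$ via $\ST$, should be decisive. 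The statement for coreflection triangles follows by formal duality, exchanging the roles of $(\Scal,\Tcal)$ and $(\Vcal,\Ucal)$, of $\Cp$ and $\Cm$, and of $\tp$ and $\tm$.
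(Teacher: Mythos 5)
Your strategy is sound and, in essence, it is how this statement is obtained: note the paper gives no proof here at all, since the Fact is quoted from Corollary 3.8 of \cite{N2}, where $\tp(C)$ is in effect constructed by exhibiting $\und{z}_C$ as a universal arrow into $\CpW$ — exactly the representability argument you set up. The existence and uniqueness of $\ze$ from the universal property of the unit is correct, and your injectivity endgame via $\C(S,V_Y[1])=\Exto(\Scal,\Vcal)=0$ is fine.

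The one step you leave open — the ``modification step,'' for which you anticipate needing the octahedral axiom — is not actually an obstacle; it evaporates by a one-line vanishing that you already have all the ingredients for. In the surjectivity argument, the obstructing map is $\phi\ci s\co S[-1]\to W_Y$, where $f\ci u=w_Y\ci\phi$; but $\C(S[-1],W_Y)\cong\C(S,W_Y[1])=\Exto(\Scal,\Wcal)=0$, since $W_Y\in\Wcal\se\Tcal$ and $\Exto(\Scal,\Tcal)=0$ for the cotorsion pair $\ST$. Hence $f\ci a=0$ holds already in $\C$, so $f$ itself lifts along $z_C$ by exactness of $\C(Z_C,Y)\to\C(C,Y)\to\C(S[-1],Y)$: no modification of $f$ and no octahedron are needed. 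The same vanishing justifies the reduction you invoke in the injectivity step: if $g\ci z_C=\beta\ci\alpha$ with $\alpha\co C\to W$, $W\in\Wcal$, then $\alpha\ci a\in\C(S[-1],W)=0$, so $\alpha$ extends to some $\gamma\co Z_C\to W$, and replacing $g$ by $g-\beta\ci\gamma$ changes nothing in $\CW$ while killing the composite with $z_C$. This is precisely the trick the paper itself uses in Remark \ref{RemFunctRefl}, where a morphism $S_A[-1]\to Z_B$ factoring through $\Wcal$ is shown to vanish because $\C(S_A[-1],\Wcal)=0$. With that observation inserted, your proof is complete, and the coreflection case follows by the formal duality you describe.
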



\begin{rem}\label{RemExistRef}$($Definition 3.4 in \cite{N2}.$)$
For any $C\in\C$, a reflection triangle always exists. Indeed, by the condition $\C=\Scal\ast\Tcal [1]=\Ucal\ast\Vcal [1]$ and the octahedron axiom, we can draw a diagram
\[
\xy
(-16.2,16)*+{S_C[-1]}="0";
(-14.9,-1)*+{U_C}="2";
(-13.5,-17.5)*+{T_C}="4";
(-4,2)*+{C}="6";
(2,-5.2)*+{Z_C}="8";
(17,8.5)*+{V_C[1]}="10";
(-8.5,-7.5)*+_{_{\circlearrowright}}="12";
(-11.5,5.5)*+_{_{\circlearrowright}}="14";
(4.3,0.5)*+_{_{\circlearrowright}}="14";
{\ar_{} "0";"2"};
{\ar_{} "2";"4"};
{\ar^{} "0";"6"};
{\ar_{} "2";"6"};
{\ar_{z_C} "6";"8"};
{\ar^{} "6";"10"};
{\ar_{} "4";"8"};
{\ar_{} "8";"10"};
\endxy
\]
satisfying $S_C\in\Scal, U_C\in\Ucal, V_C\in\Vcal, T_C\in\Tcal$, in which
\begin{eqnarray*}
&S_C[-1]\to C\to Z_C\to S_C,\quad S_C[-1]\to U_C\to T_C\to S_C,&\\
&U_C\to C\to V_C[1]\to U_C[1],\quad T_C\to Z_C\to V_C[1]\to T_C[1]&
\end{eqnarray*}
are distinguished triangles. Since $T_C\in\Tcal\cap(\Ucal\ast\Scal)=\Wcal$, it follows $Z_C\in\Cp$.

Dually for the existence of coreflection triangles.
\end{rem}


Reflection triangles are \lq\lq functorial" in the following sense.
\begin{rem}\label{RemFunctRefl}
Let $A,B\in\C$ be any object, and let
\begin{equation}\label{ReflTriA}
\xy
(-28,0)*+{S_A[-1]}="0";
(-8,0)*+{A}="2";
(8,0)*+{Z_A}="4";
(24,0)*+{S_A}="6";
(-18,-12)*+{U_A}="8";
(-18,-5)*+{_{\circlearrowright}}="10";
{\ar^{} "0";"2"};
{\ar^{z_A} "2";"4"};
{\ar_{} "4";"6"};
{\ar_{} "0";"8"};
{\ar_{} "8";"2"};
\endxy
\end{equation}
\begin{equation}\label{ReflTriB}
\xy
(-28,0)*+{S_B[-1]}="0";
(-8,0)*+{B}="2";
(8,0)*+{Z_B}="4";
(24,0)*+{S_B}="6";
(-18,-12)*+{U_B}="8";
(-18,-5)*+{_{\circlearrowright}}="10";
{\ar^{} "0";"2"};
{\ar^{z_B} "2";"4"};
{\ar_{} "4";"6"};
{\ar_{} "0";"8"};
{\ar_{} "8";"2"};
\endxy
\end{equation}
be reflection triangles. For any morphism $f\in\C(A,B)$, there exists a morphism of diagrams from $(\ref{ReflTriA})$ to $(\ref{ReflTriB})$ as
\[
\xy
(-26,7)*+{S_A[-1]}="0";
(-7,7)*+{A}="2";
(8,7)*+{Z_A}="4";
(22,7)*+{S_A}="6";
(-16,6)*+{}="7";
(-16,16)*+{U_A}="8";
(-26,-7)*+{S_B[-1]}="10";
(-7,-7)*+{B}="12";
(8,-7)*+{Z_B}="14";
(22,-7)*+{S_B}="16";
(-16,-6)*+{}="17";
(-16,-16)*+{U_B}="18";
{\ar^{} "0";"2"};
{\ar^{z_A} "2";"4"};
{\ar^{} "4";"6"};
{\ar^{} "0";"8"};
{\ar^{} "8";"2"};
{\ar^{} "0";"10"};
{\ar^{f} "2";"12"};
{\ar^{g} "4";"14"};
{\ar^{} "6";"16"};
{\ar^{} "10";"12"};
%
{\ar_{z_B} "12";"14"};
{\ar_{} "14";"16"};
{\ar^{} "10";"18"};
{\ar^{} "18";"12"};
{\ar@{}|\circlearrowright "0";"12"};
{\ar@{}|\circlearrowright "2";"14"};
{\ar@{}|\circlearrowright "4";"16"};
{\ar@{}|\circlearrowright "7";"8"};
{\ar@{}|\circlearrowright "17";"18"};
\endxy
\]
In fact, if we take a distinguished triangle
\[ W\to Z_A\to V[1]\to W[1]\qquad(W\in\Wcal,V\in\Vcal), \]
then in the diagram
\[
\xy
(0,12)*+{S_A[-1]}="0";
(0,4)*+{A}="1";
(0,-4)*+{B}="2";
(0,-12)*+{Z_B}="3";
(-16,-12)*+{W}="4";
(16,-12)*+{V[1]}="5";
(12,0)*+{U_A}="6";
(-2,0)*+{}="7";
{\ar^{} "0";"1"};
{\ar_{f} "1";"2"};
{\ar_{z_B} "2";"3"};
{\ar^{} "4";"3"};
{\ar^{f} "3";"5"};
{\ar^{} "0";"6"};
{\ar_{} "6";"3"};
{\ar@{}|\circlearrowright "6";"7"};
\endxy,
\]
the equality $\C(U_A,V[1])=0$ implies that the composed morphism $S_A[-1]\to Z_B$ factors through $W$, and thus should be the zero morphism, by $\C(S_A[-1],W)=0$.

Also remark that $\und{g}\in(\CpW)(Z_A,Z_B)$ with the commutativity $\und{g}\ci\und{z}_A=\und{z}_B\ci\und{f}$ is unique by the adjoint property.

Dually for coreflection triangles.
\end{rem}

\begin{rem}\label{RemTauUT}$($Lemmas 3.10, 3.11 in \cite{N2}.$)$
For any $C\in\C$, the following holds.
\begin{enumerate}
\item $\tp(C)=0\ \ \LR\ \ C\in\Ucal$.
\item $\tm(C)=0\ \ \LR\ \ C\in\Tcal$.
\end{enumerate}
\end{rem}

\begin{fact}\label{FactUSTV}
$($Lemmas 2.12, 2.13 in \cite{N2}.$)$
\begin{enumerate}
\item If $U[-1]\to A\to B\to U$ is a distinguished triangle satisfying $U\in \Ucal$, then
\[ A\in\Cm\ \ \tc\ \ B\in\Cm. \]
\item If $S[-1]\to A\to B\to S$ is a distinguished triangle satisfying $S\in \Scal$, then
\[ A\in\Cm\ \ \LR\ \ B\in\Cm. \]
\item If $T\to A\to B\to T[1]$ is a distinguished triangle satisfying $T\in \Tcal$, then
\[ B\in\Cp\ \ \tc\ \ A\in\Cp. \]
\item If $V\to A\to B\to V[1]$ is a distinguished triangle satisfying $V\in \Vcal$, then
\[ B\in\Cp\ \ \LR\ \ A\in\Cp. \]
\end{enumerate}
\end{fact}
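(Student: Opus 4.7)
The plan is to prove (1) and (2); parts (3) and (4) then follow by dualizing, that is, by swapping the roles of $(\Scal,\Ucal,\Cm)$ and $(\Vcal,\Tcal,\Cp)$, reversing all arrows, and invoking the dual inclusion $\Tcal\se\Cp$.

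For (1), the strategy is to combine the given triangle $U[-1]\to A\to B\to U$ with a decomposition $S_A[-1]\to A\to W_A\to S_A$ (with $S_A\in\Scal$ and $W_A\in\Wcal$) witnessing $A\in\Cm$, via two applications of the octahedron axiom. The first octahedron, applied to the composable pair $S_A[-1]\to A\to B$, produces a triangle $S_A[-1]\to B\to N\to S_A$ together with a connecting triangle $W_A\to N\to U\to W_A[1]$. Since the left class of a cotorsion pair is closed under extensions and $W_A,U\in\Ucal$, the latter gives $N\in\Ucal$. Invoking the already established inclusion $\Ucal\se\Cm$, write a decomposition $S_N[-1]\to N\to W_N\to S_N$ of $N$, and apply the octahedron a second time to $B\to N\to W_N$. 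This yields triangles $B\to W_N\to M\to B[1]$ and $S_A\to M\to S_N\to S_A[1]$; the latter puts $M$ in $\Scal\ast\Scal\se\Scal$ by extension-closure of $\Scal$, while the former rotates to $M[-1]\to B\to W_N\to M$, exhibiting $B\in\Scal[-1]\ast\Wcal=\Cm$.

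For (2), the direction $\Rightarrow$ is a special case of (1), using $\Scal\se\Ucal$. For $\Leftarrow$, a single octahedron suffices: pick a decomposition $S_B[-1]\to B\to W_B\to S_B$ witnessing $B\in\Cm$ and apply the octahedron to $A\to B\to W_B$. The companion triangle $S\to N\to S_B\to S[1]$ places $N$ in $\Scal$ by extension-closure, while the triangle $A\to W_B\to N\to A[1]$ rotates to the desired decomposition certifying $A\in\Cm$.

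The main obstacle in (1) is that the first octahedron alone does not suffice: the intermediate object $N$ a priori lies only in $\Ucal$, not in $\Wcal$, so the triangle $S_A[-1]\to B\to N\to S_A$ by itself does not place $B$ in $\Scal[-1]\ast\Wcal$. The second octahedron, fueled by the already-established inclusion $\Ucal\se\Cm$, is what converts this $\Ucal$-term into a genuine $\Wcal$-term at the cost of enlarging the $\Scal$-factor from $S_A$ to $M$; without that inclusion the argument would stall.
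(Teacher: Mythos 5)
Your argument is correct: both octahedra are applied to genuinely composable pairs, the extension-closure of $\Scal$ and $\Ucal$ (which follows from closure under direct summands together with $\Ext^1(\Ucal,\Vcal)=0$, resp. $\Ext^1(\Scal,\Tcal)=0$) is used legitimately, and the inclusion $\Ucal\se\Cm$ feeding your second octahedron is exactly the paper's earlier lemma, so the $\Ucal$-term is correctly upgraded to a $\Wcal$-term at the price of enlarging the $\Scal$-factor. Note that the paper itself gives no proof of this Fact — it only cites Lemmas 2.12 and 2.13 of \cite{N2} — and your two-octahedron argument for (1), the one-octahedron argument for (2), and the duality giving (3) and (4) reproduce the standard proof of those cited lemmas.
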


\begin{prop}
For the compositions of functors
\begin{eqnarray*}
&\tp\tm:=\big(\CW\ov{\tm}{\lra}\CmW\hr\CW\ov{\tp}{\lra}\CpW\hr\CW \big),&\\
&\tm\tp:=\big(\CW\ov{\tp}{\lra}\CpW\hr\CW\ov{\tm}{\lra}\CmW\hr\CW \big),&
\end{eqnarray*}
there exists a natural isomorphism $\tp\tm\cong\tm\tp$.
\end{prop}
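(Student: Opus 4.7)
The plan is to build, for each $C\in\C$, a single object $H\in\Hcal$ that realizes both $\tp\tm(C)$ and $\tm\tp(C)$ in $\CW$, and then deduce naturality from the functoriality of reflection and coreflection triangles.

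Fix $C$ and choose a coreflection triangle $V\to K_C\to C\to V[1]$, with $V\in\Vcal$ and $K_C\in\Cm$, so that $\tm(C)\cong K_C$ in $\CW$. Then take a reflection triangle $S[-1]\to K_C\to H\to S$ for $K_C$, with $S\in\Scal$ and $H\in\Cp$. Since $K_C\in\Cm$, Fact \ref{FactUSTV}(2) forces $H\in\Cm$ as well, hence $H\in\Hcal$, and by construction $\tp\tm(C)\cong\tp(K_C)\cong H$ in $\CW$.

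The key step is to apply the octahedral axiom to the composition $V\to K_C\to H$. The two given triangles have cones $C$ and $S$ respectively, so the axiom produces a new distinguished triangle $V\to H\to N\to V[1]$ together with the octahedral triangle $S[-1]\to C\to N\to S$. Since $H\in\Cp$, Fact \ref{FactUSTV}(4) applied to $V\to H\to N\to V[1]$ gives $N\in\Cp$, so $S[-1]\to C\to N\to S$ is a reflection triangle for $C$ and hence $\tp(C)\cong N$ in $\CW$. Dually, $H\in\Cm$ and $V\in\Vcal$ show that $V\to H\to N\to V[1]$ is a coreflection triangle for $N$, so $\tm(N)\cong H$. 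Chaining the identifications yields $\tm\tp(C)\cong\tm(N)\cong H\cong\tp\tm(C)$.

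For naturality in $C$, given $f\co C\to C'$, I would use Remark \ref{RemFunctRefl} and its dual to lift $f$ first to a map $K_C\to K_{C'}$ between coreflection data and then to $H\to H'$ between the reflection data depending on it; a morphism-of-octahedra argument then fits these choices into a compatible map between the two derived triangles producing $N$ and $N'$. The step I expect to be most delicate is not the octahedral construction itself but verifying that the resulting isomorphism $\tp\tm(C)\cong\tm\tp(C)$ is independent of the auxiliary choices and carries $\tp\tm(f)$ to $\tm\tp(f)$; this should reduce to the uniqueness clauses in the adjunctions characterizing $\tp$ and $\tm$, together with the unique factorization $\ze$ from the Fact following Definition \ref{DefNotaW}.
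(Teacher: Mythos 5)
Your route parallels the paper's at the top level — produce one object in $\Hcal$ realizing both $\tp\tm(C)$ and $\tm\tp(C)$ by a single octahedron and Fact~\ref{FactUSTV} — but the octahedron is applied differently. The paper fixes a reflection triangle and a coreflection triangle for the same $C$, builds the composite $S_C[-1]\to U_C\to K_C$, and applies the octahedral axiom to $S_C[-1]\to K_C\to C$ to get a middle object $Q$ that is simultaneously a reflection of $K_C$ and a coreflection of $Z_C$. You instead take a coreflection triangle for $C$, then a reflection triangle for $K_C$ (so that $\tp\tm(C)\cong H$ is immediate), and apply the octahedral axiom to $V\to K_C\to H$ to manufacture a reflection triangle for $C$ on the way out. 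Both are legitimate; yours is a somewhat more economical variant.

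There is, however, a real gap in how you certify the outputs of the octahedron as (co)reflection triangles. In this paper a coreflection triangle is a distinguished triangle \emph{together with} a factorization $C\to T\to V[1]$ with $T\in\Tcal$ (dually, $S[-1]\to U\to C$ with $U\in\Ucal$), and the Fact identifying the (co)reflection object with $\tp(C)$, resp.\ $\tm(C)$, in $\CW$ is stated only for diagrams carrying that datum. It does not follow from $V\in\Vcal$ and $K_C\in\Cm$ alone: for instance $V\to 0\to V[1]\to V[1]$ satisfies those conditions yet is a coreflection triangle only when $V[1]\in\Tcal$, i.e.\ when $\tm(V[1])=0$. For your $S[-1]\to C\to N\to S$ the required factorization is in fact available, since the octahedral connecting map $S[-1]\to C$ factors as $S[-1]\to K_C\to C$ and $S[-1]\to K_C$ already factors through some $U\in\Ucal$ by the reflection datum for $K_C$ — this deserves a sentence. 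For $V\to H\to N\to V[1]$ you invoke only $H\in\Cm$ and $V\in\Vcal$, and no factorization of $N\to V[1]$ through $\Tcal$ is produced. It can be supplied: since $\C(S[-1],T)=\Exto(S,T)=0$, the map $C\to T$ from the coreflection datum for $C$ extends along $C\to N$ to some $N\to T$, and the composite $N\to T\to V[1]$ agrees with the octahedral map $N\to V[1]$ because their difference factors through $S$ and $\Exto(\Scal,\Vcal)=0$. This missing step is precisely the analogue of the paper's explicit construction of $Z_C\to T_C$ using $\C(S_C[-1],T_C)=0$, which your sketch omits.
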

\begin{proof}
For any $C\in\C$, take a reflection triangle $(\ref{ReflTri})$ and a coreflection triangle $(\ref{CoreflTri})$. 
The equalities $\C(S_C[-1],T_C)=0$ and $\C(U_C,V_C[1])=0$ give morphisms $Z_C\to T_C$ and $U_C\to K_C$, which make
\[
\xy
(16,-13)*+{V_C[1]}="0";
(10,8)*+{Z_C}="2";
(0,-1)*+{C}="6";
(-10,8)*+{K_C}="8";
(-16,-13)*+{S_C[-1]}="10";
(-24,-3)*+{U_C}="12";
(24,-3)*+{T_C}="14";
(10.5,1.5)*+_{_{\circlearrowright}}="20";
(-10.5,1.5)*+_{_{\circlearrowright}}="22";
(-14,-7)*+_{_{\circlearrowright}}="24";
(14,-7)*+_{_{\circlearrowright}}="26";
{\ar^{} "6";"14"};
{\ar_{} "6";"0"};
{\ar^{z_C} "6";"2"};
{\ar^{k_C} "8";"6"};
{\ar_{} "10";"6"};
{\ar^{} "12";"6"};
{\ar_{} "10";"12"};
{\ar^{} "12";"8"};
{\ar^{} "2";"14"};
{\ar_{} "14";"0"};
\endxy
\]
commutative. Then the composed morphisms $S_C[-1]\to K_C$ and $Z_C\to V_C[1]$ make the following diagram commutative.
\[
\xy
(19,-13)*+{V_C[1]}="0";
(8,4)*+{Z_C}="2";
(0,-2)*+{C}="6";
(-8,4)*+{K_C}="8";
(-19,-13)*+{S_C[-1]}="10";
(-26,-4)*+{U_C}="12";
(26,-4)*+{T_C}="14";
(9,-3)*+_{_{\circlearrowright}}="20";
(-9,-3)*+_{_{\circlearrowright}}="22";
(-19,-5)*+_{_{\circlearrowright}}="24";
(19,-5)*+_{_{\circlearrowright}}="26";
{\ar^{} "2";"0"};
{\ar_{} "6";"0"};
{\ar_{z_C} "6";"2"};
{\ar_{k_C} "8";"6"};
{\ar_{} "10";"6"};
{\ar^{} "10";"8"};
{\ar_{} "10";"12"};
{\ar^{} "12";"8"};
{\ar^{} "2";"14"};
{\ar_{} "14";"0"};
\endxy
\]

If we complete $S_C[-1]\to K_C$ into a distinguished triangle $S_C[-1]\to K_C\to Q\to S_C$, then by the octahedron axiom, we obtain a diagram
\[
\xy
(19,-13)*+{V_C[1]}="0";
(8,4)*+{Z_C}="2";
(0,16)*+{Q}="4";
(0,-2)*+{C}="6";
(-8,4)*+{K_C}="8";
(-19,-13)*+{S_C[-1]}="10";
(0,7)*+_{_{\circlearrowright}}="12";
(9,-3)*+_{_{\circlearrowright}}="14";
(-9,-3)*+_{_{\circlearrowright}}="16";
{\ar^{} "2";"0"};
{\ar^{} "4";"2"};
{\ar_{} "6";"0"};
{\ar_{z_C} "6";"2"};
{\ar_{k_C} "8";"6"};
{\ar_{} "10";"6"};
{\ar^{} "8";"4"};
{\ar^{} "10";"8"};
\endxy
\]
 in which
\begin{eqnarray*}
&S_C[-1]\to K_C\to Q\to S_C,\quad S_C[-1]\to C\to Z_C\to S_C,&\\
&K_C\to C\to V_C[1]\to K_C[1],\quad Q\to Z_C\to V_C[1]\to Q[1]&
\end{eqnarray*}
are distinguished triangles. By Fact \ref{FactUSTV},
\begin{itemize}
\item[-] $Z_C\in\Cp$ implies $Q\in\Cp$,
\item[-] $K_C\in\Cm$ implies $Q\in\Cm$.
\end{itemize}
Thus we have $Q\in\Hcal$, and
\[
\xy
(-28,0)*+{S_C[-1]}="0";
(-8,0)*+{K_C}="2";
(8,0)*+{Q}="4";
(24,0)*+{S_C}="6";
(-18,-12)*+{U_C}="8";
(-18,-5)*+{_{\circlearrowright}}="10";
{\ar^{} "0";"2"};
{\ar^{} "2";"4"};
{\ar_{} "4";"6"};
{\ar_{} "0";"8"};
{\ar_{} "8";"2"};
\endxy
\]
\[
\xy
(-24,0)*+{V_C}="0";
(-8,0)*+{Q}="2";
(8,0)*+{Z_C}="4";
(28,0)*+{V_C[1]}="6";
(18,-12)*+{T_C}="8";
(18,-5)*+{_{\circlearrowright}}="10";
{\ar^{} "0";"2"};
{\ar^{} "2";"4"};
{\ar^{} "4";"6"};
{\ar_{} "4";"8"};
{\ar_{} "8";"6"};
\endxy
\]
are reflection and coreflection triangles, respectively. This implies $\tp(K_C)\cong Q\cong\tm(Z_C)$.
\end{proof}


\begin{cor}
$\tp(\CmW)\se\HW$ and $\tm(\CpW)\se\HW$ hold.
\end{cor}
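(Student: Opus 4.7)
The plan is to read off the corollary as a direct consequence of the preceding proposition. Although that proposition is stated as a natural isomorphism $\tp\tm\cong\tm\tp$, its proof does strictly more: for each $C\in\C$ it explicitly exhibits an object $Q\in\Hcal$ which represents both $\tp(K_C)$ and $\tm(Z_C)$ in $\CW$. So, reinterpreted, the proof shows that the endofunctors $\tp\tm,\tm\tp\co\CW\to\CW$ take their values in $\HW\se\CW$.

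For the first inclusion $\tp(\CmW)\se\HW$, pick any $X\in\Cm$. Because $X$ already lies in the essential image of the inclusion $\CmW\hr\CW$, the counit $\tm(X)\to X$ of the right-adjoint $\tm$ is an isomorphism in $\CW$. Applying $\tp$ yields $\tp(X)\cong\tp\tm(X)$ in $\CpW$, and by the remark above the right-hand side lies in $\HW$. Hence $\tp(X)\in\HW$, which is what was required.

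The inclusion $\tm(\CpW)\se\HW$ is proved by the dual argument: for $Y\in\Cp$, the unit $Y\to\tp(Y)$ of the left-adjoint $\tp$ is an isomorphism in $\CW$, so $\tm(Y)\cong\tm\tp(Y)\in\HW$. There is no genuine obstacle here; the corollary really is a corollary, and all the substance is carried by the preceding proposition. The only small point to keep in mind is the distinction between $\tp,\tm$ regarded as functors landing in $\CpW,\CmW$ and their compositions with the inclusions back into $\CW$, so that the phrase "$\tp\tm(C)\in\HW$" makes sense as an inclusion of full subcategories of $\CW$.
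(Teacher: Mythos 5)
Your proposal is correct and matches the paper's intent: the paper gives no separate proof, since the preceding proposition's proof exhibits, for each $C$, an object $Q\in\Hcal$ with $\tp(K_C)\cong Q\cong\tm(Z_C)$, which is exactly the fact you invoke. Your reduction of an arbitrary $X\in\Cm$ (resp. $Y\in\Cp$) to this situation via the counit of $\tm$ (resp. unit of $\tp$) being an isomorphism on the subcategory is the standard adjunction argument and fills in the routine step the paper leaves implicit.
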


\begin{dfn}
Define functors $h$ and $H$ to be the compositions
\begin{eqnarray*}
h&=&\tp\tm\ =\ \big(\CW\ov{\tm}{\lra}\CmW\ov{\tp}{\lra}\HW\big),\\
H&=&\big(\C\ov{\quot}{\lra}\CW\ov{h}{\lra}\HW\big).
\end{eqnarray*}
Also, let $\iota$ be the inclusion functor
\[ \iota\co\HW\hr\CW. \]
We have a natural isomorphism $h\ci\iota\cong\Id_{\HW}$.
\end{dfn}

\begin{prop}\label{RemHTauU}
For any $C\in\C$, the following are equivalent.
\begin{enumerate}
\item $H(C)=0$.
\item $\tm(C)\in\Ucal/\Wcal$.
\item $\tp(C)\in\Tcal/\Wcal$.
\end{enumerate}
\end{prop}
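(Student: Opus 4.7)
The plan is to reduce everything to the characterizations $\tp(X)=0 \LR X\in\Ucal$ and $\tm(Y)=0 \LR Y\in\Tcal$ from Remark \ref{RemTauUT}, combined with the natural isomorphism $\tp\tm\cong\tm\tp$ just established. First I would unpack the definition: $H(C)=\tp(\tm(C))$ in $\HW$, so $(1)$ is the statement $\tp(\tm(C))=0$, and via the natural isomorphism $\tp\tm\cong\tm\tp$ it is equivalently $\tm(\tp(C))=0$.

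For the implication $(1)\LR(2)$, I apply Remark \ref{RemTauUT}(1) to the object $\tm(C)\in\CmW\se\CW$: it gives $\tp(\tm(C))=0$ if and only if $\tm(C)\in\Ucal/\Wcal$, using the earlier lemma $\Ucal\se\Cm$ to ensure $\Ucal/\Wcal$ sits inside $\CmW$. For $(1)\LR(3)$, I invoke $\tp\tm(C)\cong\tm\tp(C)$ to reduce $(1)$ to $\tm(\tp(C))=0$, and then apply Remark \ref{RemTauUT}(2) to $\tp(C)\in\CpW$ to conclude $\tp(C)\in\Tcal/\Wcal$ (using $\Tcal\se\Cp$).

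There is essentially no obstacle here; the proposition is a direct packaging of the previous proposition and the two-sided vanishing criterion from Remark \ref{RemTauUT}. The only minor point worth flagging in the write-up is that Remark \ref{RemTauUT} is phrased for objects of $\C$, but since $\tp,\tm$ are functors on $\CW$, its conclusion should be read in $\CW$ and thus applies verbatim to $\tm(C)\in\CmW$ and $\tp(C)\in\CpW$ viewed as objects of $\CW$.
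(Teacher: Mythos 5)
Your proposal is correct and follows the same route as the paper, which simply cites Remark \ref{RemTauUT}: unpack $H=\tp\tm$, use the natural isomorphism $\tp\tm\cong\tm\tp$ from the preceding proposition, and apply the vanishing criteria $\tp(X)=0\LR X\in\Ucal$ and $\tm(X)=0\LR X\in\Tcal$ in $\CW$. Your remark about reading Remark \ref{RemTauUT} in the quotient $\CW$ is exactly the right (and only) point of care.
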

\begin{proof}
This follows from Remark \ref{RemTauUT}.
\end{proof}


\begin{prop}
$H(\Ucal\ast\Tcal)=0$ holds. Thus $H$ factors through $\C\ov{\quot}{\lra}\C/\Ucal\ast\Tcal$ as follows.
\[
\xy
(-6,6)*+{\C}="0";
(0,-10)*+{}="1";
(-10,-6)*+{\C/\Ucal\ast\Tcal}="2";
(10,-6)*+{\HW}="4";
{\ar_{\quot} "0";"2"};
{\ar^{H} "0";"4"};
{\ar^{} "2";"4"};
{\ar@{}|\circlearrowright "0";"1"};
\endxy
\]
In particular we have $H(\Ucal)=H(\Tcal)=0$.
\end{prop}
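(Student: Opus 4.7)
The plan is to reduce $H(C)=0$ for $C\in\Ucal\ast\Tcal$ to Proposition \ref{RemHTauU} by exhibiting $\tm(C)$ as an object of $\Ucal/\Wcal$, via a homotopy pullback construction.

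Concretely, I would start from a distinguished triangle $U\to C\to T\to U[1]$ with $U\in\Ucal$, $T\in\Tcal$, and choose a coreflection triangle $V_T\to K_T\to T\to V_T[1]$ for $T$. Since $T\in\Tcal$, Remark \ref{RemTauUT} gives $\tm(T)=0$ in $\CmW$, i.e.\ $\id_{K_T}$ factors through $\Wcal$, and closure of $\Wcal$ under summands forces $K_T\in\Wcal$. Next, I would form the homotopy pullback $P$ of $K_T\to T\leftarrow C$. This yields two distinguished triangles
\[
U\to P\to K_T\to U[1]\quad\text{and}\quad V_T\to P\to C\to V_T[1],
\]
and the boundary $C\to V_T[1]$ of the second triangle factors as $C\to T\to V_T[1]$ by the construction. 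From the first triangle, $P$ is an extension of $K_T\in\Wcal\subseteq\Ucal$ by $U\in\Ucal$; since $\Ucal={}\ppp(\Vcal[1])$ is closed under extensions, $P\in\Ucal\subseteq\Cm$. Therefore the second triangle, together with the auxiliary datum $T\in\Tcal$, qualifies as a coreflection triangle for $C$, so $\tm(C)\cong P$ in $\CmW$ lies in $\Ucal/\Wcal$. Proposition \ref{RemHTauU} then delivers $H(C)=0$.

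The factorization of $H$ through $\C/\Ucal\ast\Tcal$ is then formal: any morphism factoring through an object $M\in\Ucal\ast\Tcal$ is annihilated by $H$ because $H(M)=0$, so $H$ descends to the ideal quotient. The special cases $H(\Ucal)=H(\Tcal)=0$ follow by taking $T=0$ or $U=0$ respectively. The main technical point I anticipate is identifying the pullback triangle as a coreflection triangle for $C$ in the sense of the definition, but this comes directly from the compatibilities built into the homotopy pullback, together with the extension-closure of $\Ucal$ used to place $P$ in $\Cm$.
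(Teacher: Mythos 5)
Your construction is essentially the paper's: form the homotopy pullback (i.e.\ apply the octahedron axiom) to $U\to C\to T\to U[1]$ together with a decomposition of $T$, show the resulting object lies in $\Ucal\subseteq\Cm$ as an extension inside $\Ucal$, and read off a coreflection triangle for $C$ whose coreflection object lies in $\Ucal/\Wcal$, so that $H(C)=0$ by Proposition~\ref{RemHTauU}. The only place your route diverges, and where it is slightly less tight, is the step producing $K_T\in\Wcal$. What $\tm(T)=0$ (Remark~\ref{RemTauUT}) actually yields is that $\id_{K_T}$ factors through some $W\in\Wcal$, i.e.\ $K_T$ is a \emph{retract} of $W$; ``closure under summands'' upgrades this to a genuine direct summand only when idempotents in $\C$ split, and the paper makes no such assumption on $\C$. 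The gap is easy to close --- since $\Ucal={}\ppp(\Vcal[1])$ and $\Tcal=(\Scal[-1])\ppp$, a one-line $\Hom$-vanishing argument shows both (hence $\Wcal$) are closed under retracts, and in any case you only need $K_T\in\Ucal$ for the extension step --- but the paper sidesteps it entirely: it decomposes $T$ by the cotorsion pair as $V_T\to U_T\to T\to V_T[1]$, so $U_T\in\Ucal$ is given for free, and then the octahedron places the fiber $L$ of $C\to V_T[1]$ in $\Ucal$ as an extension of $U_T$ by $U$, with no detour through $\tm(T)$ or coreflection triangles for $T$.
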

\begin{proof}
Let $U\to C\to T\to U[1]$ be any distinguished triangle satisfying $U\in\Ucal$ and $T\in\Tcal$. If we decompose $T$ into a distinguished triangle
\[ V_T\to U_T\to T\to V_T[1]\quad (U_T\in\Ucal, V_T\in\Vcal), \]
then by the octahedron axiom, 
we obtain a diagram
\[
\xy
(-20,16)*+{U}="0";
(0.5,15)*+{L}="2";
(17,14)*+{U_T}="4";
(-2,4)*+{C}="6";
(6.1,-0.6)*+{T}="8";
(-5.8,-14.2)*+{V_T[1]}="10";
(7.5,8.5)*+_{_{\circlearrowright}}="12";
(-5.5,11.5)*+_{_{\circlearrowright}}="14";
(-0.5,-4.3)*+_{_{\circlearrowright}}="14";
{\ar^{} "0";"2"};
{\ar^{} "2";"4"};
{\ar_{} "0";"6"};
{\ar^{} "2";"6"};
{\ar^{} "6";"8"};
{\ar_{} "6";"10"};
{\ar^{} "4";"8"};
{\ar^{} "8";"10"};
\endxy
\]
 in which
\begin{eqnarray*}
&U\to L\to U_T\to U[1],\quad U\to C\to T\to U[1],&\\
&L\to C\to V_T[1]\to L[1],\quad U_T\to T\to V_T[1]\to U_T[1]&
\end{eqnarray*}
are distinguished triangles.
Since $U,U_T\in\Ucal$ implies $L\in\Ucal\se\Cm$, this gives a coreflection triangle
\[
\xy
(-24,0)*+{V_T}="0";
(-8,0)*+{L}="2";
(8,0)*+{C}="4";
(28,0)*+{V_T[1]}="6";
(18,-12)*+{T}="8";
(18,-5)*+{_{\circlearrowright}}="10";
{\ar^{} "0";"2"};
{\ar^{} "2";"4"};
{\ar^{} "4";"6"};
{\ar_{} "4";"8"};
{\ar_{} "8";"6"};
\endxy
\]
which yields $\tm(C)\cong L\in\Ucal/\Wcal$, and thus $H(C)=0$.
\end{proof}



\section{Problem setting}


Let $\C\ppr$ be another triangulated category,
and let $\Pcal\ppr=(\STp,(\Ucal\ppr, \Tcal\ppr))$ be a twin cotorsion pair
on $\C\ppr$.
\begin{nota}\label{NotaPrime}
We denote the associated categories as
\begin{eqnarray*}
&\Wcal\ppr=\Ucal\ppr\cap\Tcal\ppr,&\\
&\Cpp=\Wcal\ppr\ast\Vcal\ppr[1],\quad \Cmp=\Scal\ppr[-1]\ast\Wcal\ppr,&\\
&\Hcal\ppr=\Cpp\cap\Cmp,&
\end{eqnarray*}
and functors as
\begin{eqnarray*}
&h\ppr=\tpp\tmp\co\CWp\to\HWp,&\\
&H\ppr=\big(\C\ppr\ov{\quot}{\lra}\CWp\ov{h\ppr}{\lra}\HWp\big),&\\
&\iota\ppr\co\HWp\hr\CWp.&
\end{eqnarray*}
\end{nota}


\begin{dfn}\label{DefHeartEquiv}
Let $\C,\C\ppr$ and $\Pcal,\Pcal\ppr$ be as above, and let $F\co\C\ov{\simeq}{\lra}\C\ppr$ be a triangle equivalence. $\Pcal$ is said to be {\it heart-equivalent to $\Pcal\ppr$ along $F$} if there exists an equivalence of categories $E\co\HW\ov{\simeq}{\lra}\HWp$ which makes the following diagram commutative up to a natural isomorphism.
\begin{equation}\label{DiagFE}
\xy
(-10,7)*+{\C}="0";
(10,7)*+{\C\ppr}="2";
(-10,-7)*+{\HW}="4";
(10,-7)*+{\HWp}="6";
{\ar^{F}_{\simeq} "0";"2"};
{\ar_{H} "0";"4"};
{\ar^{H\ppr} "2";"6"};
{\ar_{E}^{\simeq} "4";"6"};
{\ar@{}|\circlearrowright "0";"6"};
\endxy
\end{equation}
Remark that this notion of a heart-equivalence is an equivalence relation, in an obvious sense.
\end{dfn}

\begin{prop}
Let $\C,\C\ppr,\Pcal,\Pcal\ppr$ and $F$ be as above.
Then the following {\rm (I)} implies {\rm (II)}.
\begin{enumerate}
\item[{\rm (I)}] $\Pcal$ is heart-equivalent to $\Pcal\ppr$ along $F$.
\item[{\rm (II)}] The following conditions are satisfied.
\begin{itemize}
\item[{\rm (i)}] $H\ppr F(\Ucal)=H\ppr F(\Tcal)=0$.
\item[{\rm (ii)}] $HF\iv(\Ucal\ppr)=HF\iv(\Tcal\ppr)=0$.
\end{itemize}
\end{enumerate}
\end{prop}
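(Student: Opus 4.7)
The plan is to derive (II) directly from the commutative square (\ref{DiagFE}) by combining it with the vanishing $H(\Ucal)=H(\Tcal)=0$ established in the preceding proposition, together with its primed counterpart $H\ppr(\Ucal\ppr)=H\ppr(\Tcal\ppr)=0$ obtained by applying that proposition to $\Pcal\ppr$. No new structural input beyond (I) and these two vanishing results is required, so the argument should be purely formal.

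For (II)(i) I would fix a natural isomorphism $\eta\co E\ci H\ov{\cong}{\lra}H\ppr\ci F$ witnessing (I). For any $U\in\Ucal$ the preceding proposition gives $H(U)=0$, and since $E$ is an equivalence of additive categories it preserves the zero object, so $EH(U)\cong 0$ in $\HWp$; the component $\eta_U$ then transports this to $H\ppr F(U)\cong 0$. Running the same argument on $T\in\Tcal$ produces the second half of (II)(i).

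For (II)(ii) I would invoke the symmetry of heart-equivalence remarked on in Definition \ref{DefHeartEquiv}. Choosing a quasi-inverse $G\co\C\ppr\ov{\simeq}{\lra}\C$ of $F$ and a quasi-inverse $E\ppr\co\HWp\ov{\simeq}{\lra}\HW$ of $E$, whiskering $\eta$ on the left by $E\ppr$ and on the right by $G$ yields a natural isomorphism $H\ci G\cong E\ppr\ci H\ppr$. Evaluating at objects of $\Ucal\ppr$ and $\Tcal\ppr$ and applying the primed vanishing proposition gives $HG(\Ucal\ppr)=HG(\Tcal\ppr)=0$, which is (II)(ii) once $F\iv$ is read as the quasi-inverse $G$; since $\Ucal\ppr$ and $\Tcal\ppr$ are closed under isomorphism, the conclusion is independent of the particular choice of quasi-inverse.

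I do not expect any substantive obstacle: the statement is essentially a tautology expressing that the vanishing properties of $H$ and $H\ppr$ are compatible with a heart-equivalence. The only minor subtlety is that $F\iv$ is only defined up to natural isomorphism, but vanishing under $H$ is isomorphism-invariant, so this ambiguity is harmless.
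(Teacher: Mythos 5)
Your proposal is correct and follows exactly the paper's (very brief) argument: the paper also deduces (II) immediately from $H(\Ucal)=H(\Tcal)=0$ and $H\ppr(\Ucal\ppr)=H\ppr(\Tcal\ppr)=0$ via the commutative square defining heart-equivalence; you have merely spelled out the routine details (equivalences preserve zero objects, whiskering to handle the quasi-inverse, isomorphism-invariance of vanishing).
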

\begin{proof}
This immediately follows from $H(\Ucal)=H(\Tcal)=0$ and $H\ppr(\Ucal\ppr)=H\ppr(\Tcal\ppr)=0$.
\end{proof}

\smallskip

The following is our problem in this article.
\begin{problem}\label{ProbOne}
Conversely, does {\rm (I)} follow from {\rm (II)} with some extra conditions?
\end{problem}

\smallskip

We are going to reduce this problem to a more manageable one (Problem \ref{ProbTwo}).
First, remark that this is reduced to the case $F=\Id$.
\begin{rem}\label{ClaimOne}
Let $F\co\C\ov{\simeq}{\lra}\C\ppr$ be a triangle equivalence, and let $\Pcal=\STUV$ be a  twin cotorsion pair on $\C$.
If we put $F(\Pcal)=((F(\Scal),F(\Tcal)),(F(\Ucal),F(\Vcal)))$, then the following holds.
\begin{enumerate}
\item $F(\Pcal)$ is a twin cotorsion pair on $\C\ppr$.
\item $\Pcal$ is heart-equivalent to $F(\Pcal)$ along $F$.
\end{enumerate}

Thus, replacing $\Pcal$ by $F(\Pcal)$, we may assume $\C=\C\ppr$ and $F=\Id_{\C}$ from the first.
\end{rem}

\medskip

From this we may assume $F=\Id_{\C}$, and $\Pcal\ppr$ is a twin cotorsion pair on $\C$. We will keep using 
Notation \ref{NotaPrime} also in this case. For example, $H\ppr$ denotes a functor $H\ppr\co \C\to\HWp$.

\medskip

Second, note that the candidate for $E$ in $(\ref{DiagFE})$ is unique up to natural transformations.
\begin{rem}\label{ClaimTwo}
Let $\Pcal=\STUV$ and $\Pcal\ppr=\STUVp$ be twin cotorsion pairs on $\C$. Assume that the condition
\[ H\ppr(\Wcal)=0 \]
is satisfied. Then the following holds by the commutativity of the following diagrams.

\[
\xy
(-16,13)*+{\Hcal}="-2";
(-4,-3)*+{}="-1";
(0,8)*+{\C}="0";
(0,-13)*+{}="1";
(-16,-8)*+{\HW}="2";
(16,-8)*+{\HWp}="4";
{\ar@{^(->}^{} "-2";"0"};
{\ar_{\quot} "-2";"2"};
{\ar^{H} "0";"2"};
{\ar^{H\ppr} "0";"4"};
{\ar_{E} "2";"4"};
{\ar@{}|\circlearrowright "0";"1"};
{\ar@{}|\circlearrowright "-2";"-1"};
\endxy
\ ,\quad
\xy
(-16,13)*+{\Hcal}="-2";
(-1,-8)*+{}="-1";
(0,8)*+{\C}="0";
(9,-13)*+{}="1";
(-16,-8)*+{\HW}="2";
(0,-8)*+{\CW}="3";
(16,-8)*+{\HWp}="4";
{\ar@{^(->}^{} "-2";"0"};
{\ar_{\quot} "-2";"2"};
{\ar_{\quot} "0";"3"};
{\ar^{H\ppr} "0";"4"};
{\ar_{\iota} "2";"3"};
{\ar_(0.46){\Hbarp} "3";"4"};
{\ar@{}|\circlearrowright "0";"1"};
{\ar@{}|\circlearrowright "-2";"-1"};
\endxy
\].
\begin{enumerate}
\item $H\ppr$ induces a functor $\Hbarp\co\CW\to\HWp$ which makes the following diagram commutative up to a natural isomorphism.
\[
\xy
(-6,6)*+{\C}="0";
(0,-10)*+{}="1";
(-10,-6)*+{\CW}="2";
(10,-6)*+{\HWp}="4";
{\ar_{\quot} "0";"2"};
{\ar^{H\ppr} "0";"4"};
{\ar_(0.46){\Hbarp} "2";"4"};
{\ar@{}|\circlearrowright "0";"1"};
\endxy
\]
\item If a functor $E\co\HW\to\HWp$ makes the diagram
\[
\xy
(0,7)*+{\C}="0";
(0,-10)*+{}="1";
(-11,-6)*+{\HW}="2";
(11,-6)*+{\HWp}="4";
{\ar_{H} "0";"2"};
{\ar^{H\ppr} "0";"4"};
{\ar_{E} "2";"4"};
{\ar@{}|\circlearrowright "0";"1"};
\endxy
\]
commutative up to a natural isomorphism, then there exists a natural isomorphism
\[ E\cong\Hbarp\ci\iota, \]
where $\iota\co\HW\hr\CW$ is the inclusion.
\end{enumerate}
\end{rem}


Thus, under the conditions $H\ppr(\Wcal)=0$ and $H(\Wcal\ppr)=0$ (remark that condition {\rm (II)} in Problem \ref{ProbOne} implies these conditions), we may restrict our attention to the functors
\[ E=\Hbarp\ci\iota\co\HW\to\HWp \quad\text{and}\quad E\ppr=\Hbar\ci\iota\ppr\co\HWp\to\HW, \]
and it is enough to find a condition which induces
\begin{itemize}
\item[{\rm (a)}] $E\ci H\cong H\ppr$, $E\ppr\ci H\ppr\cong H$,
\item[{\rm (b)}] $E\ci E\ppr\cong\Id$, $E\ppr\ci E\cong\Id$.
\end{itemize}
However, {\rm (b)} follows from {\rm (a)}, as follows.
\begin{lem}\label{ClaimThree}
Let $\Pcal$ and $\Pcal\ppr$ be twin cotorsion pairs on $\C$, 
and assume that $H\ppr(\Wcal)=0$ and $H(\Wcal\ppr)=0$ are satisfied. Let $\Hbarp\co\CW\to\HWp$ and $\Hbar\co\C/\Wcal\ppr\to\HW$ be the unique functors induced from $H\ppr$ and $H$ respectively, as in Remark \ref{ClaimTwo}.
Put
\begin{eqnarray*}
&E=\Hbarp\ci\iota\co\HW\to\HWp,&\\
&E\ppr=\Hbar\ci\iota\ppr\co\HWp\to\HW.&
\end{eqnarray*}
If $E$ and $E\ppr$ satisfy {\rm (a)}, then they also satisfy {\rm (b)}.
\end{lem}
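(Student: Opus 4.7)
The plan is to deduce (b) from (a) by pivoting through the basic identity $h\ci\iota\cong\Id_{\HW}$ (recorded earlier in the excerpt), together with its primed counterpart $h\ppr\ci\iota\ppr\cong\Id_{\HWp}$. Let $\iota_{\Hcal}\co\Hcal\hr\C$ denote the inclusion and $q\co\Hcal\to\HW$ the ideal quotient restricted to $\Hcal$, so that the restriction of $\C\ov{\quot}{\lra}\CW$ to $\Hcal$ factors as $\iota\ci q$. Combining this with the definition $H=h\ci\quot$ yields $H\ci\iota_{\Hcal}=h\ci\iota\ci q\cong q$, and symmetrically $H\ppr\ci\iota_{\Hcal\ppr}\cong q\ppr$.

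The first key step is to chain the two natural isomorphisms of (a). Precomposing $E\ci H\cong H\ppr$ with $\iota_{\Hcal}$ and then applying $E\ppr$, I obtain
\[ E\ppr\ci E\ci q \;\cong\; E\ppr\ci H\ppr\ci\iota_{\Hcal} \;\cong\; H\ci\iota_{\Hcal} \;\cong\; q, \]
a natural isomorphism of functors $\Hcal\to\HW$. The second key step is to descend this along $q$ to a natural isomorphism $E\ppr\ci E\cong\Id_{\HW}$ of endofunctors of $\HW$. Since $q$ is the identity on objects and full on morphisms, the descent is automatic: each object of $\HW$ equals $q(X)$ for a unique $X\in\Hcal$, so the component at $q(X)$ of the displayed isomorphism defines the required component at that object, and naturality over $\HW$ reduces, by fullness of $q$, to the already established naturality over $\Hcal$. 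The identity $E\ci E\ppr\cong\Id_{\HWp}$ then follows by the symmetric argument, swapping the roles of the primed and unprimed data.

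The only delicate point, and the step I expect to require the most care, is the descent: one must verify that a natural isomorphism $F\ci q\cong G\ci q$ for functors $F,G\co\HW\to\Dcal$ induces a well-defined natural isomorphism $F\cong G$. This is standard for an ideal quotient (bijective on objects, full on morphisms), but one must confirm that the naturality square for an arbitrary morphism of $\HW$ holds independently of the chosen lift to $\Hcal$, which is immediate from the naturality of the original transformation over $\Hcal$. With this routine check in hand, both identities in (b) follow.
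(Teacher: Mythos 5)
Your proposal is correct and follows essentially the same route as the paper: restrict along $\Hcal\hr\C$, chain the natural isomorphisms from (a) together with $H\ci\iota_{\Hcal}\cong\quot$ (coming from $h\ci\iota\cong\Id_{\HW}$) and the factorization of the quotient through $\HW$, obtaining $E\ppr\ci E\ci q\cong q$, and then descend along the ideal quotient $q\co\Hcal\to\HW$. The paper presents exactly this as one commutative diagram of natural isomorphisms and leaves the descent step implicit, which you spell out correctly (bijectivity on objects and fullness of $q$).
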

\begin{proof}
$E\ppr\ci E\cong\Id$ follows from the commutativity of the following diagram up to natural isomorphisms.
\[
\xy
(-28,26)*+{\Hcal}="0";
(-28,-24)*+{\HW}="2";
(-10,10)*+{\C}="4";
(-10,-14)*+{\C/\Wcal}="6";
(10,-24)*+{\HWp}="10";
(30,10)*+{\HW}="12";
(-19,0)*+{_{\circlearrowright}}="21";
(-7.5,15)*+{_{\circlearrowright}}="22";
(-4,-10)*+{_{\circlearrowright}}="23";
(-9,-20)*+{_{\circlearrowright}}="25";
(10,-2.5)*+{_{\circlearrowright}}="28";
{\ar_{\quot} "0";"2"};
{\ar@{_(->} "0";"4"};
{\ar^{\quot} "0";"12"};
{\ar^{\iota} "2";"6"};
{\ar_{E} "2";"10"};
{\ar_{\quot} "4";"6"};
{\ar^{H\ppr} "4";"10"};
{\ar^{H} "4";"12"};
{\ar_{\Hbarp} "6";"10"};
{\ar_{E\ppr} "10";"12"};
\endxy
\]
Similarly for $E\ci E\ppr\cong\Id$.
\end{proof}


By Remarks \ref{ClaimOne}, \ref{ClaimTwo} and Lemma \ref{ClaimThree}, Problem \ref{ProbOne} has been reduced to the following.
\begin{problem}\label{ProbTwo}
Let $\Pcal=\STUV$ and $\Pcal\ppr=\STUVp$ be twin cotorsion pairs on $\C$. Suppose that the condition
\[ H\ppr(\Ucal)=H\ppr(\Tcal)=0 \]
is satisfied. Let $\Hbarp\co\CW\to\HWp$ be the functor induced from $H\ppr$, and put $E=\Hbarp\ci\iota$. With some extra condition, does there exist a natural isomorphism $E\ci H\cong H\ppr$?
\end{problem}


\section{Degenerated case}
Consider the case each of $\Pcal$ and $\Pcal\ppr$ is degenerated to a single cotorsion pair. In this case, it requires no extra condition. This is essentially due to \cite{ZZ} (see also \cite{L2}).

\begin{rem}
If $\Pcal$ is degenerated, then its heart $\HW$ becomes an abelian category (\cite{N1}), and the functor $H\co\C\to\HW$ becomes cohomological (\cite{AN}).
\end{rem}

\begin{prop}\label{PropSingle}
Let $\UV$ and $\UVp$ be cotorsion pairs on $\C$. Suppose that the condition
\[ H\ppr(\Ucal)=H\ppr(\Vcal)=0 \]
is satisfied. Let $\Hbarp\co\CW\to\HWp$ be the functor induced from $H$, and put $E=\Hbarp\ci\iota$. Then, $E\ci H\cong H\ppr$ holds.
\end{prop}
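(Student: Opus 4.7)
The plan is to exhibit, for each $C\in\C$, a natural zigzag of isomorphisms
\[ H\ppr(C)\xleftarrow{H\ppr(k_C)}H\ppr(K_C)\xrightarrow{H\ppr(z_{K_C})}H\ppr(Z_{K_C})=E(H(C)), \]
where $V\to K_C\to C\to V[1]$ is a coreflection triangle of $C$ (with $V\in\Vcal$, $K_C\in\Cm$) and $S[-1]\to K_C\to Z_{K_C}\to S$ is a reflection triangle of $K_C$ (with $S\in\Scal=\Ucal$, $Z_{K_C}\in\Hcal$). By construction $H(C)=\und{Z_{K_C}}$ in $\HW$, so $E(H(C))=\Hbarp(\iota(\und{Z_{K_C}}))=H\ppr(Z_{K_C})$, and it suffices to show that $H\ppr(k_C)$ and $H\ppr(z_{K_C})$ are natural isomorphisms.

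Since $\Pcal\ppr$ is degenerate, $H\ppr$ is cohomological. Applied to the coreflection triangle, the long exact sequence
\[ H\ppr(V)\to H\ppr(K_C)\xrightarrow{H\ppr(k_C)}H\ppr(C)\to H\ppr(V[1]) \]
has vanishing leftmost term by hypothesis. For the rightmost connecting map, the definition of a coreflection triangle provides a factorization $C\to T\to V[1]$ with $T\in\Tcal=\Vcal$, so $H\ppr(C)\to H\ppr(V[1])$ factors through $H\ppr(T)=0$. Hence $H\ppr(k_C)$ is an isomorphism. Dually, the long exact sequence of the reflection triangle of $K_C$
\[ H\ppr(S[-1])\to H\ppr(K_C)\xrightarrow{H\ppr(z_{K_C})}H\ppr(Z_{K_C})\to H\ppr(S) \]
has vanishing rightmost term (since $S\in\Ucal$), and the built-in factorization $S[-1]\to U\to K_C$ with $U\in\Ucal$ makes $H\ppr(S[-1])\to H\ppr(K_C)$ factor through $H\ppr(U)=0$; thus $H\ppr(z_{K_C})$ is also an isomorphism.

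Naturality follows from the functoriality of coreflection and reflection triangles (Remark \ref{RemFunctRefl} and its dual): $k_C$ and $z_{K_C}$ become natural transformations $\tm\to\Id$ and $\tm\to\tp\tm=h$ of functors $\CW\to\CW$; postcomposing with $\Hbarp$ yields natural transformations of functors $\C\to\HWp$ which are isomorphisms by the previous paragraph. Composing produces the desired natural isomorphism $E\ci H\cong H\ppr$. The only substantive point is the vanishing of the two connecting maps via the built-in factorizations through $\Ucal$ and $\Tcal$; everything else is a formal consequence of the cohomological property of $H\ppr$ together with the hypothesis $H\ppr(\Ucal)=H\ppr(\Vcal)=0$.
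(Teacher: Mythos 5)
Your proposal is correct and is essentially the paper's own proof: the paper defines the same zigzag $\eta_A=H\ppr(z_{K_A})\ci H\ppr(k_A)\iv$, shows both maps are isomorphisms using that $H\ppr$ is cohomological together with $H\ppr(\Ucal)=H\ppr(\Vcal)=0$ and the factorizations through $\Ucal$ and $\Tcal$ built into the reflection and coreflection triangles, and gets naturality from Remark \ref{RemFunctRefl}. The only cosmetic difference is that you phrase naturality via the unit/counit of the adjunctions in $\CW$ pushed through $\Hbarp$, whereas the paper writes out the corresponding morphisms of exact sequences explicitly.
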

\begin{proof}
For each object $A\in\C$, choose a reflection triangle $(\ref{ReflTriA})$, and a coreflection triangle
\[
\xy
(-24,0)*+{V_A}="0";
(-8,0)*+{K_A}="2";
(8,0)*+{A}="4";
(28,0)*+{V_A[1]}="6";
(18,-12)*+{T_A}="8";
(18,-5)*+{_{\circlearrowright}}="10";
{\ar^{} "0";"2"};
{\ar^{k_A} "2";"4"};
{\ar^{} "4";"6"};
{\ar_{} "4";"8"};
{\ar_{} "8";"6"};
\endxy.
\]
By Remark \ref{RemFunctRefl}, for any morphism $f\in\C(A,B)$, we obtain a morphism of diagrams
\[
\xy
(26,7)*+{V_A[1]}="0";
(7,7)*+{A}="2";
(-8,7)*+{K_A}="4";
(-22,7)*+{V_A}="6";
(16,6)*+{}="7";
(16,16)*+{T_A}="8";
(26,-7)*+{V_B[1]}="10";
(7,-7)*+{B}="12";
(-8,-7)*+{K_B}="14";
(-22,-7)*+{V_B}="16";
(16,-6)*+{}="17";
(16,-16)*+{T_B}="18";
{\ar^{} "2";"0"};
{\ar^{k_A} "4";"2"};
{\ar^{} "6";"4"};
{\ar^{} "8";"0"};
{\ar^{} "2";"8"};
{\ar^{} "0";"10"};
{\ar_{f} "2";"12"};
{\ar_{g} "4";"14"};
{\ar^{} "6";"16"};
{\ar_{}"12";"10"};
{\ar_{k_B} "14";"12"};
{\ar_{} "16";"14"};
{\ar_{} "18";"10"};
{\ar_{} "12";"18"};
{\ar@{}|\circlearrowright "2";"14"};
{\ar@{}|\circlearrowright "4";"16"};
{\ar@{}|\circlearrowright "7";"8"};
{\ar@{}|\circlearrowright "17";"18"};
{\ar@{}|\circlearrowright "0";"12"};
\endxy
\]
and
\[
\xy
(-28,7)*+{S_{K_A}[-1]}="0";
(-7,7)*+{K_A}="2";
(8,7)*+{Z_{K_A}}="4";
(22,7)*+{S_{K_A}}="6";
(-17,6)*+{}="7";
(-17,16)*+{U_{K_A}}="8";
(-28,-7)*+{S_{K_B}[-1]}="10";
(-7,-7)*+{K_B}="12";
(8,-7)*+{Z_{K_B}}="14";
(22,-7)*+{S_{K_B}}="16";
(-17,-6)*+{}="17";
(-17,-16)*+{U_{K_B}}="18";
{\ar^{} "0";"2"};
{\ar^{z_{K_A}} "2";"4"};
{\ar^{} "4";"6"};
{\ar^{} "0";"8"};
{\ar^{} "8";"2"};
{\ar^{} "0";"10"};
{\ar^{g} "2";"12"};
{\ar^{h} "4";"14"};
{\ar^{} "6";"16"};
{\ar_{} "10";"12"};
{\ar_{z_{K_B}} "12";"14"};
{\ar_{} "14";"16"};
{\ar^{} "10";"18"};
{\ar^{} "18";"12"};
{\ar@{}|\circlearrowright "2";"14"};
{\ar@{}|\circlearrowright "4";"16"};
{\ar@{}|\circlearrowright "7";"8"};
{\ar@{}|\circlearrowright "17";"18"};
{\ar@{}|\circlearrowright "0";"12"};
\endxy,
\]
which imply $H(f)=\tp\tm(\und{f})=\tp(\und{g})=\und{h}$.

Since $H\ppr$ is cohomological and $H\ppr(\Ucal)=H\ppr(\Tcal)=0$ by assumption, we obtain morphisms of exact sequences
\[
\xy
(-27,7)*+{0}="0";
(-11,7)*+{H\ppr(K_A)}="2";
(11,7)*+{H\ppr(A)}="4";
(31,7)*+{H\ppr(V_A[1])}="6";
(50,7)*+{\exact};
(-27,-7)*+{0}="10";
(-11,-7)*+{H\ppr(K_B)}="12";
(11,-7)*+{H\ppr(B)}="14";
(31,-7)*+{H\ppr(V_B[1])}="16";
(50,-7)*+{\exact};
{\ar^{} "0";"2"};
{\ar^(0.54){H\ppr(k_A)} "2";"4"};
{\ar^(0.44){0} "4";"6"};
%
{\ar_{H\ppr(g)} "2";"12"};
{\ar_{H\ppr(f)} "4";"14"};
{\ar^{} "6";"16"};
{\ar_{} "10";"12"};
{\ar_(0.54){H\ppr(k_B)} "12";"14"};
{\ar_(0.44){0} "14";"16"};
%
{\ar@{}|\circlearrowright "2";"14"};
{\ar@{}|\circlearrowright "4";"16"};
\endxy
\]
and
\[
\xy
(-35,7)*+{H\ppr(S_{K_A}[-1])}="0";
(-11,7)*+{H\ppr(K_A)}="2";
(14,7)*+{H\ppr(Z_{K_A})}="4";
(31,7)*+{0}="6";
(42,7)*+{\exact};
(-35,-7)*+{H\ppr(S_{K_B}[-1])}="10";
(-11,-7)*+{H\ppr(K_B)}="12";
(14,-7)*+{H\ppr(Z_{K_B})}="14";
(31,-7)*+{0}="16";
(42,-7)*+{\exact};
{\ar^(0.56){0} "0";"2"};
{\ar^{H\ppr(z_{K_A})} "2";"4"};
{\ar^{} "4";"6"};
{\ar_{} "0";"10"};
{\ar^{H\ppr(g)} "2";"12"};
{\ar^{H\ppr(h)} "4";"14"};
%
{\ar_(0.56){0} "10";"12"};
{\ar_{H\ppr(z_{K_B})} "12";"14"};
{\ar_{} "14";"16"};
{\ar@{}|\circlearrowright "0";"12"};
{\ar@{}|\circlearrowright "2";"14"};
\endxy.
\]
These yield a commutative diagram
\[
\xy
(-37,7)*+{H\ppr(A)}="2";
(-11,7)*+{H\ppr(K_A)}="4";
(15,7)*+{H\ppr(Z_{K_A})}="6";
(30.5,7)*+{=EH(A)};
(-37,-7)*+{H\ppr(B)}="12";
(-11,-7)*+{H\ppr(K_B)}="14";
(15,-7)*+{H\ppr(Z_{K_B})}="16";
(30.5,-7)*+{=EH(B)};
{\ar_{H\ppr(k_A)}^{\cong} "4";"2"};
{\ar^{H\ppr(z_{K_A})}_{\cong} "4";"6"};
{\ar_{H\ppr(f)} "2";"12"};
{\ar^{H\ppr(g)} "4";"14"};
{\ar^{H\ppr(h)=EH(f)} "6";"16"};
{\ar^{H\ppr(k_B)}_{\cong} "14";"12"};
{\ar_{H\ppr(z_{K_B})}^{\cong} "14";"16"};
{\ar@{}|\circlearrowright "2";"14"};
{\ar@{}|\circlearrowright "4";"16"};
\endxy.
\]
Thus if we put
\[ \eta_A=H\ppr(z_{K_A})\ci H\ppr(k_A)\iv, \]
then $\eta=\{\eta_A\}_{A\in\C}$ gives a natural isomorphism $\eta\co H\ppr\ov{\cong}{\Longrightarrow}E\ci H$.
\end{proof}

In terms of the original problem (Problem \ref{ProbOne}), we obtain the following.
\begin{cor}
Let $\Pcal=\UV$ and $\Pcal\ppr=\UVp$ be cotorsion pairs on $\C$ and $\C\ppr$, respectively.
Let $F\co\C\ov{\simeq}{\lra}\C\ppr$ be a triangle equivalence. Then, the following are equivalent.
\begin{enumerate}
\item $\Pcal$ is heart-equivalent to $\Pcal\ppr$ along $F$.
\item $H\ppr F(\Ucal)=H\ppr F(\Vcal)=0$ and $HF\iv(\Ucal\ppr)=HF\iv(\Vcal\ppr)=0$ are satisfied.
\end{enumerate}
\end{cor}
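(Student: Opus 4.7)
The plan is to treat this corollary as essentially a direct assembly of the previously established pieces, since the degenerated case has already done all the real work in Proposition \ref{PropSingle}.

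For the implication (1) $\Rightarrow$ (2), I would simply invoke the general fact proved already: in any heart-equivalence situation we have $H(\Ucal)=H(\Tcal)=0$ and $H\ppr(\Ucal\ppr)=H\ppr(\Tcal\ppr)=0$, and for degenerated cotorsion pairs the equalities $\Scal=\Ucal$, $\Tcal=\Vcal$ (and primed versions) reduce these to the stated vanishings. So this direction is automatic.

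For the nontrivial direction (2) $\Rightarrow$ (1), I would first apply Remark \ref{ClaimOne} to transport $\Pcal$ along $F$, so that without loss of generality $\C=\C\ppr$ and $F=\Id_{\C}$; note that $F(\Pcal)$ remains a (degenerated) cotorsion pair, and the hypotheses transfer. In this reduced setting Proposition \ref{PropSingle} applies directly: from $H\ppr(\Ucal)=H\ppr(\Vcal)=0$ we obtain the induced functor $\Hbarp\co\CW\to\HWp$, set $E=\Hbarp\ci\iota$, and conclude $E\ci H\cong H\ppr$. Symmetrically, from $H(\Ucal\ppr)=H(\Vcal\ppr)=0$ the same proposition (with the roles of $\Pcal$ and $\Pcal\ppr$ swapped) gives $E\ppr=\Hbar\ci\iota\ppr\co\HWp\to\HW$ with $E\ppr\ci H\ppr\cong H$.

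At this point conditions (a) of the general discussion are in hand for both $E$ and $E\ppr$, so Lemma \ref{ClaimThree} immediately yields condition (b), namely $E\ci E\ppr\cong\Id$ and $E\ppr\ci E\cong\Id$. Thus $E\co\HW\ov{\simeq}{\lra}\HWp$ is an equivalence making the square in Definition \ref{DefHeartEquiv} commute up to natural isomorphism, which is exactly heart-equivalence of $\Pcal$ and $\Pcal\ppr$ along $F$. There is no serious obstacle here: the only thing to be careful about is that Proposition \ref{PropSingle} is stated for pairs $\UV$, $\UVp$ on the \emph{same} category $\C$, which is why the reduction to $F=\Id_{\C}$ via Remark \ref{ClaimOne} is necessary before invoking it twice.
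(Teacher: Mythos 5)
Your proposal is correct and follows essentially the same route as the paper: the paper's one-line proof ("immediately follows from Proposition \ref{PropSingle}") implicitly relies on exactly the Section 3 reduction you spell out — Remark \ref{ClaimOne} to pass to $F=\Id_{\C}$, Remark \ref{ClaimTwo} to identify $E=\Hbarp\ci\iota$ (using $\Wcal\se\Ucal$ so the hypotheses give $H\ppr(\Wcal)=0$, $H(\Wcal\ppr)=0$), and Lemma \ref{ClaimThree} to upgrade the two instances of (a) from Proposition \ref{PropSingle} to the mutually inverse equivalences in (b). Your explicit remark that Proposition \ref{PropSingle} lives on a single category, forcing the reduction before invoking it twice, is exactly the intended reading.
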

\begin{proof}
This immediately follows from Proposition \ref{PropSingle}.
\end{proof}


\section{A sufficient condition in general case}

We return to the general case of twin cotorsion pairs. Let $\Pcal=\STUV$ be a twin cotorsion pair on $\C$. We start with some improvements of the results from \cite{N2}.

The following gives a partial converse to Lemma 5.1 in \cite{N2}.
\begin{prop}\label{PropCokOne}
Let $S[-1]\ov{e}{\lra}A\ov{f}{\lra}B\ov{g}{\lra}S$ be a distinguished triangle with $S\in\Scal$. Then,
\[ H(S[-1])\ov{H(e)}{\lra}H(A)\ov{H(f)}{\lra}H(B)\to 0 \]
is a cokernel sequence in $\HW$.
\end{prop}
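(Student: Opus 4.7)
The plan is to verify the two defining properties of a cokernel in the preabelian category $\HW$: namely, $H(f) \ci H(e) = 0$ and the universal factorization property through $H(f)$. The first is immediate, since $f \ci e = 0$ in the distinguished triangle gives $H(f) \ci H(e) = H(0) = 0$.

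For the universal property, fix $X \in \Hcal$ (every object of $\HW$ is literally an object of $\Hcal$) and let $\phi \co H(A) \to X$ in $\HW$ satisfy $\phi \ci H(e) = 0$. Since $\HW$ is the full subcategory of $\CW$ on objects of $\Hcal$ and $\tp$ is left adjoint to the inclusion $\CpW \hookrightarrow \CW$, the condition $X \in \Cp$ yields a natural bijection
\[
\Hom_{\HW}\bigl(H(C), X\bigr) \;\cong\; \Hom_{\CW}\bigl(\tm(C), X\bigr)
\]
for every $C \in \C$. Writing $K_A = \tm(A)$, $K_B = \tm(B)$, and observing $\tm(S[-1]) = S[-1]$ since $S[-1] \in \Cm$, the task reduces to a cokernel statement in $\CW$: given $\tilde\phi \co K_A \to X$ in $\CW$ with $\tilde\phi \ci \tm(\und{e}) = 0$, produce a unique $\tilde\psi \co K_B \to X$ in $\CW$ with $\tilde\psi \ci \tm(\und{f}) = \tilde\phi$.

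A crucial simplification comes from the cotorsion pair axiom $\Exto(\Scal, \Tcal) = 0$: since $\Wcal \se \Tcal$, one has $\C(S[-1], W) = \C(S, W[1]) = 0$ for every $W \in \Wcal$, so any $\Wcal$-factoring with source $S[-1]$ vanishes already in $\C$. Consequently, one may pick a representative $e_0 \co S[-1] \to K_A$ of $\tm(\und{e})$ with $k_A \ci e_0 = e$ in $\C$ on the nose, and for any representative $\phi_0 \co K_A \to X$ of $\tilde\phi$ the hypothesis $\tilde\phi \ci \tm(\und{e}) = 0$ in $\CW$ upgrades to $\phi_0 \ci e_0 = 0$ in $\C$. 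For existence, I would construct a distinguished triangle $S[-1] \xrightarrow{e_0} K_A \xrightarrow{f_0} K_B \to S$ in $\C$ via the $3 \times 3$ (or octahedron) lemma applied to the morphism of coreflection triangles induced by $f$ together with the given triangle $A \to B \to S$; the auxiliary cone of $V_A \to V_B$ is absorbed in $\CW$ via Fact~\ref{FactUSTV}. Applying $\C(-, X)$ to this triangle and using $\phi_0 \ci e_0 = 0$ yields $\phi_0 = \psi_0 \ci f_0$ for some $\psi_0 \co K_B \to X$, whose class is the required $\tilde\psi$.

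For uniqueness, the difference $\delta \co K_B \to X$ of two candidate factorizations satisfies $\delta \ci \tm(\und{f}) = 0$ in $\CW$, so $\delta \ci f_0$ factors through some $W \in \Wcal$ in $\C$. Applying $\C(-, W)$ to the constructed triangle and using $\C(S[-1], W) = 0$, this $\Wcal$-factoring extends along $K_B$, reducing to $(\delta - \delta') \ci f_0 = 0$ in $\C$ for some $\delta'$ factoring through $\Wcal$; hence $\delta - \delta'$ factors through the cofiber $S$. Since $X \in \Cp = \Wcal \ast \Vcal[1]$ and $\Exto(\Scal, \Vcal) = 0$, every morphism $S \to X$ factors through $\Wcal$, forcing $\delta = 0$ in $\HW$. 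The main obstacle is the construction of the lifted triangle $S[-1] \to K_A \to K_B \to S$ up to $\Wcal$-absorbable terms; the rest is standard long exact sequence manipulation combined with the two $\Exto$-vanishing conditions from the twin cotorsion pair.
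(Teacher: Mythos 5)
Your overall strategy matches the paper's: reduce via the $\tp$-adjunction to an exactness statement in $\CW$, lift the triangle to the level of coreflections, and then exploit the two vanishings $\C(S[-1],\Wcal)=0$ (from $\Exto(\Scal,\Tcal)=0$) and $\C(\Scal,\Vcal[1])=0$ (the twin cotorsion pair condition). Your uniqueness argument, granted the triangle, is essentially the paper's step (2).

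The gap is in the existence step, exactly at the place you flag as ``the main obstacle.'' If you take \emph{arbitrary} coreflection triangles $V_A\to K_A\to A\to V_A[1]$ and $V_B\to K_B\to B\to V_B[1]$ and apply the $3\times3$ lemma together with $A\xrightarrow{f}B\to S$, the cone of the induced map $K_A\to K_B$ is not $S$: it sits in a distinguished triangle $M\to Q\to S\to M[1]$, where $M$ is the cone of $V_A\to V_B$. There is no reason for $M$ to be ``absorbable'' in $\CW$ --- $M$ lies only in $\Vcal\ast\Vcal[1]$, which is in general not contained in $\Wcal$, and Fact~\ref{FactUSTV} only gives membership in $\Cm$ or $\Cp$; it does not make maps out of $M$ vanish after passing to $\CW$. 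So the claimed triangle $S[-1]\to K_A\to K_B\to S$ does not exist for an arbitrary choice of $K_A$.

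The paper's proof evades this by \emph{not} choosing $K_A$ independently. It first fixes the coreflection triangle $V_B\to K_B\xrightarrow{k_B}B\xrightarrow{v_B}V_B[1]$ of $B$, then completes $v_B\ci f\co A\to V_B[1]$ to a triangle $V_B\to L\xrightarrow{\ell}A\to V_B[1]$. An octahedron over $f\ci\ell=k_B\ci h$ then produces simultaneously (i) the distinguished triangle $S[-1]\to L\xrightarrow{h}K_B\to S$ you want, and (ii) the datum $L\to A\to V_B[1]$, which Fact~\ref{FactUSTV} (applied to $S[-1]\to L\to K_B\to S$ with $K_B\in\Cm$) upgrades to a coreflection triangle for $A$, so $\tm(\und f)=\und h$ and $\tm(\und e)=\und s$ on the nose with no auxiliary $M$-term. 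Your argument would go through if you replaced ``pick $K_A=\tm(A)$ arbitrarily'' by this compatible construction of $L$; as written, the absorption step is unjustified and the triangle you need is not actually produced.
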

\begin{proof}
By the adjoint property of $\tp$, it is enough to show that the sequence
\begin{equation}\label{SeqE}
0\to(\CW)(\tm(B),Y)\ov{-\ci\tm(\und{f})}{\lra}(\CW)(\tm(A),Y)\ov{-\ci\tm(\und{e})}{\lra}(\CW)(\tm(S[-1]),Y)
\end{equation}
is exact for any $Y\in\Cp$.

Take a coreflection triangle
\[
\xy
(-24,0)*+{V_B}="0";
(-8,0)*+{K_B}="2";
(8,0)*+{B}="4";
(28,0)*+{V_B[1]}="6";
(18,-12)*+{T_B}="8";
(18,-5)*+{_{\circlearrowright}}="10";
{\ar^{} "0";"2"};
{\ar^{k_B} "2";"4"};
{\ar^{v_B} "4";"6"};
{\ar_{} "4";"8"};
{\ar_{} "8";"6"};
\endxy
\]
and complete $v_B\ci f\co B\to V_B[1]$ into a distinguished triangle
\[ V_B\to L\ov{\ell}{\lra}A\ov{v_B\ci f}{\lra}V_B[1]. \]
By the octahedron axiom, we obtain a diagram
\[
\xy
(-20,16)*+{S[-1]}="0";
(0.5,15)*+{L}="2";
(17,14)*+{K_B}="4";
(-2,4)*+{A}="6";
(6.1,-0.6)*+{B}="8";
(-5.8,-14.2)*+{V_B[1]}="10";
(7.5,8.5)*+_{_{\circlearrowright}}="12";
(-5.5,11.5)*+_{_{\circlearrowright}}="14";
(-0.5,-4.3)*+_{_{\circlearrowright}}="14";
{\ar^{s} "0";"2"};
{\ar^{h} "2";"4"};
{\ar_{e} "0";"6"};
{\ar^{\ell} "2";"6"};
{\ar^{f} "6";"8"};
{\ar_{} "6";"10"};
{\ar^{k_B} "4";"8"};
{\ar^{v_B} "8";"10"};
\endxy
\]
in which
\begin{eqnarray*}
&S[-1]\to L\to K_B\to S,\quad S[-1]\to A\to B\to S,&\\
&L\to A\to V_B[1]\to L[1],\quad K_B\to B\to V_B[1]\to K_B[1]&
\end{eqnarray*}
are distinguished triangles.
Then by Fact \ref{FactUSTV}, $K_B\in\Cm$ implies $L\in\Cm$, and thus
\[
\xy
(-24,0)*+{V_B}="0";
(-8,0)*+{L}="2";
(8,0)*+{A}="4";
(28,0)*+{V_B[1]}="6";
(18,-12)*+{T_B}="8";
(18,-5)*+{_{\circlearrowright}}="10";
{\ar^{} "0";"2"};
{\ar^{\ell} "2";"4"};
{\ar^{v_B\ci f} "4";"6"};
{\ar_{} "4";"8"};
{\ar_{} "8";"6"};
\endxy
\]
becomes a coreflection triangle. Thus we may assume $\tm(\und{f})=\und{h}$. Since $S[-1]\in\Scal [-1]\se\Cm$, we may also assume $\tm(\und{e})=\und{s}$.

\medskip

 {\rm (1)} Let $X\in\C$ be any object, and let $x\in\C(L,X)$ be any morphism.
If $x$ satisfies $\und{x}\ci\und{s}=0$, then $x\ci s$ factors through some $W\in\Wcal$ as follows.
\[
\xy
(-8,0)*+{S[-1]}="2";
(8,0)*+{L}="4";
(24,0)*+{K_B}="6";
(-8,-12)*+{W}="8";
(8,-12)*+{X}="10";
{\ar^(0.54){s} "2";"4"};
{\ar^(0.46){h} "4";"6"};
{\ar_{} "2";"8"};
{\ar^{x} "4";"10"};
{\ar_{} "8";"10"};
{\ar@{}|\circlearrowright "2";"10"};
\endxy
\]
Since $\C(S[-1],W)=0$ it follows $x\ci s=0$, and thus $x$ factors through $h$.

\smallskip
 {\rm (2)} Let $Y\in\Cp$ be any object, and let $y\in\C(K_B,Y)$ be any morphism. Decompose $Y$ into a distinguished triangle
\[ V_Y\to W_Y\ov{w_Y}{\lra}Y\ov{v_Y}{\lra}V_Y[1] \]
satisfying $V_Y\in\Vcal, W_Y\in\Wcal$.
If $y$ satisfies $\und{y}\ci\und{h}=0$, then $y\ci h$ factors through $w_Y$ as follows.
\[
\xy
(-8,0)*+{L}="2";
(8,0)*+{K_B}="4";
(22,0)*+{S}="6";
(-8,-12)*+{W_Y}="8";
(8,-12)*+{Y}="10";
(25,-12)*+{V_Y[1]}="12";
{\ar^{h} "2";"4"};
{\ar^{} "4";"6"};
{\ar_{} "2";"8"};
{\ar^{y} "4";"10"};
{\ar_{w_Y} "8";"10"};
{\ar_(0.46){v_Y} "10";"12"};
{\ar@{}|\circlearrowright "2";"10"};
\endxy
\]
Thus $v_Y\ci y$ should factor through some morphism $S\to V_Y[1]$. However, since $\C(S,V_Y[1])=0$, this implies $v_Y\ci y=0$. Thus $y$ factors through $w_Y$, which means $\und{y}=0$.

Exactness of $(\ref{SeqE})$ follows from {\rm (1)} and {\rm (2)}.
\end{proof}
\begin{prop}\label{PropCokThree}
Let $C[-1]\to A\ov{f}{\lra}B\ov{g}{\lra}C$ be a distinguished triangle. Suppose $C$ admits a coreflection triangle
\[
\xy
(-24,0)*+{V}="0";
(-8,0)*+{S}="2";
(8,0)*+{C}="4";
(28,0)*+{V[1]}="6";
(18,-12)*+{T}="8";
(18,-5)*+{_{\circlearrowright}}="10";
{\ar^{} "0";"2"};
{\ar^{} "2";"4"};
{\ar^{} "4";"6"};
{\ar_{} "4";"8"};
{\ar_{} "8";"6"};
\endxy
\]
satisfying $S\in\Scal$. Then, $H(f)$ is epimorphic in $\HW$.
\end{prop}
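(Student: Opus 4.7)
The plan is to apply the octahedron axiom to the composable pair $S[-1]\ov{k[-1]}{\lra} C[-1]\ov{d}{\lra} A$, where $k\co S\to C$ is the morphism appearing in the given coreflection triangle and $d\co C[-1]\to A$ is the connecting morphism of the distinguished triangle $C[-1]\to A\ov{f}{\lra}B\ov{g}{\lra}C$. This produces an object $M$ together with two distinguished triangles
\[
S[-1]\to A\ov{\alpha}{\lra} M\ov{\beta}{\lra} S\qquad\text{and}\qquad V\to M\ov{\gamma}{\lra} B\ov{v}{\lra} V[1],
\]
and the octahedron compatibilities yield $f=\gamma\ci\alpha$ together with the identification of $v$ with the composite $B\ov{g}{\lra}C\ov{\phi}{\lra}V[1]$, where $\phi$ denotes the connecting morphism of the coreflection triangle of $C$. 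Since the first triangle has $S\in\Scal$ at its extremities, Proposition \ref{PropCokOne} applies directly and gives that $H(\alpha)$ is an epimorphism in $\HW$; the remaining task is to show that $H(\gamma)\co H(M)\to H(B)$ is an isomorphism.

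For this, I would exploit the factorization $C\to T\to V[1]$ through some $T\in\Tcal$ that is built into the data of the coreflection triangle of $C$. Precomposing with $g$ exhibits $v=\phi\ci g$ as a morphism factoring through $T$, and since $H(\Tcal)=0$, we obtain $H(v)=0$ in $\HW$. Next, I would invoke the dual of Proposition \ref{PropCokOne}: for any distinguished triangle $V\to X\to Y\to V[1]$ with $V\in\Vcal$, the sequence $0\to H(X)\to H(Y)\to H(V[1])$ is a kernel sequence in $\HW$. This dual is proved by the same adjoint-$\Hom$ argument as Proposition \ref{PropCokOne} with the roles of $\tp/\tm$ and $\Scal/\Vcal$ exchanged, and is essentially Lemma 5.1 of \cite{N2}. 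Applied to the second octahedron triangle, it identifies $H(M)$ with the kernel of $H(v)\co H(B)\to H(V[1])$; since $H(v)=0$, this kernel equals $H(B)$, so $H(\gamma)$ is an isomorphism. Therefore $H(f)=H(\gamma)\ci H(\alpha)$ is the composite of an isomorphism with an epimorphism, and hence is epi.

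The main point requiring care is a clean extraction from the octahedron of both the factorization $f=\gamma\ci\alpha$ and the identification $v=\phi\ci g$; once these compatibilities are in hand, the argument reduces to quoting Proposition \ref{PropCokOne} together with its dual kernel-sequence analogue.
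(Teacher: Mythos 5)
Your proof is correct, and it takes a genuinely different route from the paper's. The paper first chooses a coreflection triangle $V_B\to K_B\xrightarrow{k_B}B\to V_B[1]$ for $B$, invokes the functoriality of coreflection triangles (the dual of Remark \ref{RemFunctRefl}) to produce a morphism $h\co K_B\to S$ compatible with $g$, completes $h$ to a triangle $S[-1]\to L\xrightarrow{\ell}K_B\xrightarrow{h}S$, and finally compares this with the original triangle to get $H(f)\ci H(a)=H(k_B)\ci H(\ell)$, where $H(k_B)$ is an isomorphism and $H(\ell)$ is epi by Proposition \ref{PropCokOne}. You instead apply the octahedron axiom directly to the composite $S[-1]\xrightarrow{k[-1]}C[-1]\xrightarrow{d}A$, obtaining $M$ together with the two triangles $S[-1]\to A\xrightarrow{\alpha}M\to S$ and $V\to M\xrightarrow{\gamma}B\xrightarrow{v}V[1]$, and the compatibilities $f=\gamma\ci\alpha$ and $v=\phi\ci g$. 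Proposition \ref{PropCokOne} makes $H(\alpha)$ epi; the fact that $\phi$ factors through $T\in\Tcal$ forces $H(v)=0$, and the dual kernel-sequence version of Proposition \ref{PropCokOne} then makes $H(\gamma)$ the kernel of the zero map, hence an isomorphism. Your construction bypasses the auxiliary coreflection triangle for $B$ and the functoriality argument entirely, so it is more self-contained; the price is that you must use both Proposition \ref{PropCokOne} and its dual, whereas the paper only needs the original version (plus the trivial observation that $H(k_B)$ is an isomorphism by the very definition of coreflection). Both are sound; the key factorization of $H(f)$ as (isomorphism)$\,\ci\,$(epimorphism) is shared, but the auxiliary object and the way it is produced differ.
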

\begin{proof}
Take a coreflection triangle
\[
\xy
(-24,0)*+{V_B}="0";
(-8,0)*+{K_B}="2";
(8,0)*+{B}="4";
(28,0)*+{V_B[1]}="6";
(18,-12)*+{T_B}="8";
(18,-5)*+{_{\circlearrowright}}="10";
{\ar^{} "0";"2"};
{\ar^{k_B} "2";"4"};
{\ar^{} "4";"6"};
{\ar_{} "4";"8"};
{\ar_{} "8";"6"};
\endxy.
\]
Remark that $H(k_B)$ is isomorphic.
Then, we obtain the following commutative diagram by (the dual of) Remark \ref{RemFunctRefl}.
\[
\xy
(0,14)*+{K_B}="2";
(18,14)*+{S}="4";
(-18,0)*+{A}="10";
(0,0)*+{B}="12";
(18,0)*+{C}="14";
(15,-7)*+{}="17";
(28,-7)*+{T}="18";
(0,-14)*+{V_B[1]}="22";
(18,-14)*+{V[1]}="24";
{\ar^{h} "2";"4"};
{\ar_{k_B} "2";"12"};
{\ar^{} "4";"14"};
{\ar^{f} "10";"12"};
{\ar^{g} "12";"14"};
{\ar^{} "14";"18"};
{\ar^{} "18";"24"};
{\ar^{} "12";"22"};
{\ar^{} "14";"24"};
{\ar^{} "22";"24"};
{\ar@{}|\circlearrowright "2";"14"};
{\ar@{}|\circlearrowright "12";"24"};
{\ar@{}|\circlearrowright "17";"18"};
\endxy
\]

If we complete $h$ into a distinguished triangle
\[ S[-1]\to L\ov{\ell}{\lra}K_B\ov{h}{\lra}S, \]
then we have a commutative diagram
\[
\xy
(-22,6)*+{S[-1]}="0";
(-7,6)*+{L}="2";
(6,6)*+{K_B}="4";
(20,6)*+{S}="6";
(-22,-6)*+{C[-1]}="10";
(-7,-6)*+{A}="12";
(6,-6)*+{B}="14";
(20,-6)*+{C}="16";
{\ar^{} "0";"2"};
{\ar^{\ell} "2";"4"};
{\ar^{h} "4";"6"};
{\ar^{} "0";"10"};
{\ar^{a} "2";"12"};
{\ar^{k_B} "4";"14"};
{\ar^{} "6";"16"};
{\ar_{} "10";"12"};
{\ar_{f} "12";"14"};
{\ar_{} "14";"16"};
{\ar@{}|\circlearrowright "0";"12"};
{\ar@{}|\circlearrowright "2";"14"};
{\ar@{}|\circlearrowright "4";"16"};
\endxy.
\]
By Proposition \ref{PropCokOne}, $H(\ell)$ is epimorphic. Thus $H(f)\ci H(a)=H(k_B)\ci H(\ell)$ also becomes epimorphic. This implies $H(f)$ is epimorphic.
\end{proof}

\begin{prop}\label{RemCokThree}
For any object $C\in\C$, the following are equivalent.
\begin{enumerate}
\item $H(C)=0$ and $C\in\Scal\ast\Vcal [1]$.
\item $\tm(C)\in\Ucal/\Wcal$ and $C\in\Scal\ast\Vcal [1]$.
\item $C$ admits a coreflection triangle
\[
\xy
(-24,0)*+{V}="0";
(-8,0)*+{S}="2";
(8,0)*+{C}="4";
(28,0)*+{V[1]}="6";
(18,-12)*+{T}="8";
(18,-5)*+{_{\circlearrowright}}="10";
{\ar^{} "0";"2"};
{\ar^{} "2";"4"};
{\ar^{} "4";"6"};
{\ar_{} "4";"8"};
{\ar_{} "8";"6"};
\endxy
\]
satisfying $S\in\Scal$.
\end{enumerate}
\end{prop}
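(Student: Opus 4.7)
The equivalence $(1) \LR (2)$ is immediate from Proposition \ref{RemHTauU}; both contain the clause $C \in \Scal \ast \Vcal[1]$, and the remaining pieces $H(C) = 0$ and $\tm(C) \in \Ucal/\Wcal$ are equivalent by that proposition.

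For $(3) \tc (1)$, rotation of a coreflection triangle $V \to S \to C \to V[1]$ with $S \in \Scal$ exhibits $C \in \Scal \ast \Vcal[1]$, and by the defining property of $\tm$ one has $\tm(C) = \und{S}$ in $\CmW$, which lies in $\Ucal/\Wcal$ since $\Scal \se \Ucal$; Proposition \ref{RemHTauU} then gives $H(C) = 0$.

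For the nontrivial direction $(2) \tc (3)$, fix a distinguished triangle $V \to S \to C \to V[1]$ with $S \in \Scal$, $V \in \Vcal$, arising from $C \in \Scal \ast \Vcal[1]$. Two of the three conditions defining a coreflection triangle are already visible: $V \in \Vcal$, and $S \in \Cm$ since $\Scal \se \Ucal \se \Cm$. The remaining task is to produce $T \in \Tcal$ and a factorization $C \to T \to V[1]$ of the connecting map. My plan is to construct $T$ by an octahedron argument mirroring the dual of Remark \ref{RemExistRef}: apply the $\Scal \ast \Tcal[1]$-decomposition to $S[1]$ and shift back to obtain a distinguished triangle $S'[-1] \to S \to T'' \to S'$ with $S' \in \Scal$ and $T'' \in \Tcal$; a routine long-exact-sequence argument on this triangle using $\Ext^1(\Scal, \Vcal) = 0$ (the twin cotorsion pair axiom) together with the cotorsion pair axiom for $\UV$ then forces $T'' \in \Ucal$, hence $T'' \in \Wcal$. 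Applying the octahedron axiom to the composable pair $V \to S \to T''$ produces a distinguished triangle $V \to T'' \to T \to V[1]$ and a relating triangle $C \to T \to S' \to C[1]$, and the octahedron's commutativity identifies the composite $C \to T \to V[1]$ with the original connecting map $\alpha$.

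The main obstacle is verifying $T \in \Tcal$: although both $V$ and $T''$ lie in $\Tcal$, closure of $\Tcal$ under cones (as opposed to extensions) is not automatic in a cotorsion pair. Settling this step requires combining the information from both octahedron triangles and exploiting the hypothesis $\tm(C) \in \Ucal/\Wcal$ (equivalently $H(C)=0$) to eliminate potential higher-$\Ext$ obstructions, thereby ensuring the constructed $T$ actually belongs to $\Tcal$ and completes the coreflection diagram.
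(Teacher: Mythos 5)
The directions $(1)\LR(2)$ and $(3)\tc(1),(2)$ in your proposal are fine and essentially match the paper. For the crucial direction $(2)\tc(3)$, however, your proof has a genuine gap — one you honestly flag, but which you do not resolve, and which I do not believe your chosen route can easily resolve. The octahedron construction you describe (decompose $S$ via $\Scal\ast\Tcal[1]$ to get $S\to T''\to S'$ with $T''\in\Wcal$, then cone the composite $V\to S\to T''$ to produce $T$) makes no use whatsoever of the hypothesis $\tm(C)\in\Ucal/\Wcal$. If that construction yielded $T\in\Tcal$ unconditionally, it would show that every $C\in\Scal\ast\Vcal[1]$ admits a coreflection triangle with $K_C\in\Scal$, hence $H(C)=0$ — which is false. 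So the hypothesis is essential, and the handwave that one should ``combine the information from both octahedron triangles and exploit $H(C)=0$'' is not a proof: nothing in your construction gives you a handle on $\Ext^1(\Scal,T)$, since from $V\to T''\to T\to V[1]$ the obstruction lives in $\C(\Scal,V[2])$, which is uncontrolled.

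The paper's argument for $(2)\tc(3)$ takes a different and cleaner route. Rather than trying to build $T$ from scratch, it uses the hypothesis $\tm(C)\in\Ucal/\Wcal$ up front to obtain a coreflection triangle $V\to U\to C\to V[1]$ for $C$ with $U\in\Ucal$ (and its accompanying $T\in\Tcal$, $C\to T\to V[1]$). It then takes the given $\Scal\ast\Vcal[1]$-decomposition $V_0\to S\to C\to V_0[1]$ and observes that $\C(U,V_0[1])=0$ produces a morphism of triangles from the coreflection triangle to this decomposition, with the identity on $C$. Composing $C\to T\to V[1]\to V_0[1]$ then shows that the connecting map $C\to V_0[1]$ factors through the \emph{same} $T$, so the $\Scal\ast\Vcal[1]$-decomposition is itself a coreflection triangle. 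This sidesteps entirely the problem of showing a freshly constructed cone lies in $\Tcal$: the needed $T$ is simply inherited from the coreflection triangle that the hypothesis already guarantees. I would recommend you abandon the octahedron approach for this direction and argue instead by comparison with an existing coreflection triangle along these lines.
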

\begin{proof}
The equivalence $(1)\LR(2)$ follows from Proposition \ref{RemHTauU}. Suppose {\rm (3)} holds. This implies $\tm(C)\cong S\in\Ucal/\Wcal$, and also $C\in\Scal\ast\Vcal [1]$.

Conversely, suppose {\rm (2)} holds. By $\tm(C)\in\Ucal/\Wcal$, this $C$ admits a coreflection triangle
\[
\xy
(-24,0)*+{V}="0";
(-8,0)*+{U}="2";
(8,0)*+{C}="4";
(28,0)*+{V[1]}="6";
(18,-12)*+{T}="8";
(18,-5)*+{_{\circlearrowright}}="10";
{\ar^{} "0";"2"};
{\ar^{} "2";"4"};
{\ar^{} "4";"6"};
{\ar_{} "4";"8"};
{\ar_{} "8";"6"};
\endxy
\]
satisfying $U\in\Ucal$. By $C\in\Scal\ast\Vcal [1]$, it can be also decomposed into a distinguished triangle
\[ V_0\to S\to C\to V_0[1] \]
satisfying $S\in\Scal, V_0\in\Vcal$. Then by $\C(U,V_0[1])=0$, we obtain the following commutative diagram.
\[
\xy
(25,7)*+{V[1]}="0";
(7,7)*+{C}="2";
(-8,7)*+{U}="4";
(-23,7)*+{V}="6";
(15,6)*+{}="7";
(15,16)*+{T}="8";
(25,-6)*+{V_0[1]}="10";
(7,-6)*+{C}="12";
(-8,-6)*+{S}="14";
(-23,-6)*+{V_0}="16";
{\ar^{} "2";"0"};
{\ar^{} "4";"2"};
{\ar^{} "6";"4"};
{\ar^{} "8";"0"};
{\ar^{} "2";"8"};
{\ar^{} "0";"10"};
{\ar@{=} "2";"12"};
{\ar_{} "4";"14"};
{\ar^{} "6";"16"};
{\ar_{} "12";"10"};
{\ar_{} "14";"12"};
{\ar_{} "16";"14"};
{\ar@{}|\circlearrowright "0";"12"};
{\ar@{}|\circlearrowright "2";"14"};
{\ar@{}|\circlearrowright "4";"16"};
{\ar@{}|\circlearrowright "7";"8"};
\endxy
\]
This gives the following coreflection triangle, and thus {\rm (3)} holds.
\[
\xy
(-24,0)*+{V_0}="0";
(-8,0)*+{S}="2";
(8,0)*+{C}="4";
(28,0)*+{V_0[1]}="6";
(18,-12)*+{T}="8";
(18,-5)*+{_{\circlearrowright}}="10";
{\ar^{} "0";"2"};
{\ar^{} "2";"4"};
{\ar^{} "4";"6"};
{\ar_{} "4";"8"};
{\ar_{} "8";"6"};
\endxy
\]
\end{proof}

\begin{cor}\label{CorCok}
Let $C[-1]\ov{e}{\lra}A\ov{f}{\lra}B\ov{g}{\lra}C$ be a distinguished triangle. Suppose $C$ satisfies $H(C)=0$ and $C\in\Scal\ast\Vcal [1]$.
Then,
\[ H(C[-1])\ov{H(e)}{\lra}H(A)\ov{H(f)}{\lra}H(B)\to 0 \]
becomes a cokernel sequence in $\HW$.
\end{cor}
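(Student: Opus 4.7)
The plan is to reduce to Proposition \ref{PropCokOne} via the octahedron axiom, using the coreflection triangle of $C$ provided by Proposition \ref{RemCokThree}. First I will invoke \ref{RemCokThree} to obtain a coreflection triangle $V\to S\to C\to V[1]$ of $C$ with $S\in\Scal$, $V\in\Vcal$, and the connecting morphism $C\to V[1]$ factoring through some $T\in\Tcal$. Shifting by $[-1]$ gives a distinguished triangle $S[-1]\ov{\sigma}{\lra}C[-1]\to V\to S$, and applying the octahedron to the composition $e\ci\sigma\co S[-1]\to A$ produces an object $X$ fitting into distinguished triangles
\[ S[-1]\to A\to X\to S\quad\text{and}\quad V\to X\ov{\beta}{\lra}B\ov{g'}{\lra}V[1], \]
with $\beta$ composed with $A\to X$ equal to $f$, and $g'=(C\to V[1])\ci g$ factoring through $T\in\Tcal$. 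Proposition \ref{PropCokOne} applied to the first triangle then gives that $H(S[-1])\to H(A)\to H(X)\to 0$ is a cokernel sequence in $\HW$.

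The core step --- and what I expect to be the main obstacle --- is showing that $H(\beta)\co H(X)\to H(B)$ is an isomorphism. I will prove this by establishing that $\tm(\beta)$ is an isomorphism in $\CmW$, from which $H(\beta)=\tp(\tm(\beta))$ being an isomorphism is automatic. By the adjunction between $\tm$ and the inclusion, it suffices to verify that $\beta_\ast\co(\CW)(Y,X)\to(\CW)(Y,B)$ is a bijection for every $Y\in\CmW$. Fix the decomposition $S'[-1]\to Y\to W'\to S'$ witnessing $Y\in\Cm$, with $S'\in\Scal$ and $W'\in\Wcal$, and write $g'=t'\ci t$ with $t\co B\to T$ and $t'\co T\to V[1]$. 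Two vanishings do the work: first, any morphism $Y\to T$ factors through $W'$, because $\C(\Scal,T[1])=\Exto(\Scal,\Tcal)=0$ kills the $S'[-1]$-component; second, $\C(\Wcal,\Vcal[1])\se\Exto(\Ucal,\Vcal)=0$. For surjectivity, given $z\co Y\to B$, the obstruction $g'\ci z=t'\ci(t\ci z)$ vanishes in $\C$ by chaining these two vanishings, so $z$ lifts through $\beta$. For injectivity, if $\beta\ci z=w_2\ci w_1$ factors through $W\in\Wcal$, the same $\C(\Wcal,\Vcal[1])=0$ permits a lift $w_4\co W\to X$ of $w_2$ with $\beta\ci w_4=w_2$; subtracting $w_4\ci w_1$ from $z$ makes $\beta\ci z=0$ strictly, so the residue factors through $V$, and then $(\CW)(Y,V)=0$ (which follows from $\Exto(\Scal,\Vcal)=0$ by the same pattern) forces $z$ itself to factor through $\Wcal$.

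Finally, to upgrade the cokernel sequence from $H(e\ci\sigma)$ to $H(e)$: since $f\ci e=0$, the map $H(f)$ factors as $H(A)\ov{p}{\thra}\mathrm{coker}(H(e))\ov{\bar f}{\lra}H(B)$, and because the image of $H(e\ci\sigma)$ is contained in that of $H(e)$, the projection $p$ further factors through the cokernel $H(A)\ov{q}{\thra}H(X)$, giving a canonical surjection $\pi\co H(X)\thra\mathrm{coker}(H(e))$ with $\bar f\ci\pi=H(\beta)$. Since $H(\beta)$ is an isomorphism by the previous paragraph, $\pi$ admits a left inverse; being already an epimorphism, $\pi$ must itself be an isomorphism, and therefore so is $\bar f$. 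This identifies $\mathrm{coker}(H(e))$ with $H(B)$ via $H(f)$, yielding the desired cokernel sequence $H(C[-1])\to H(A)\to H(B)\to 0$ in $\HW$.
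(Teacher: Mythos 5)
Your proof is correct, and it shares the paper's skeleton --- the coreflection triangle of $C$ from Proposition \ref{RemCokThree} and the octahedron on the composite $S[-1]\to C[-1]\to A$ (your $X$, $\beta$, $\sigma$ are the paper's $D$, $h$, $-k[-1]$) --- but it diverges in how the conclusion is extracted. The paper quotes Proposition \ref{PropCokThree} to get that $H(f)$ is epimorphic, then applies Proposition \ref{PropCokOne} and its dual three times (to $S[-1]\to A\to D\to S$, to $S[-1]\to C[-1]\to V\to S$ using $H(\Vcal)=0$, and to $V\to D\to B\to V[1]$) and finishes with a short diagram chase; note that in a preabelian heart ``mono plus epi'' alone would not suffice, which is why the paper tracks exactness at $H(A)$ explicitly. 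You instead prove the stronger intermediate statement that $\tm(\und{\beta})$ is invertible already in $\CmW$, by checking bijectivity of $\und{\beta}_*$ on $(\CW)(Y,-)$ for $Y\in\Cm$ via the vanishings $\Exto(\Scal,\Tcal)=\Exto(\Ucal,\Vcal)=\Exto(\Scal,\Vcal)=0$ and the factorization of $g'$ through $\Tcal$; this makes $H(\beta)$ an isomorphism, gives the epimorphicity of $H(f)=H(\beta)\ci H(a)$ for free (so Proposition \ref{PropCokThree} is not needed), and your final formal cokernel comparison ($\pi$ epi with a left inverse, hence iso) correctly replaces the paper's chase. Two remarks: your hom-level computation essentially redoes, by hand, what the dual of Proposition \ref{PropCokOne} gives more quickly --- the kernel sequence $0\to H(X)\ov{H(\beta)}{\lra}H(B)\ov{H(g')}{\lra}H(V[1])$ together with $H(g')=0$ (as $g'$ factors through $\Tcal$ and $H(\Tcal)=0$) already forces $H(\beta)$ to be an isomorphism; and the signs in $\sigma=\pm k[-1]$ and $g'$ are immaterial, as you implicitly assume. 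So the trade-off is: the paper's proof is shorter given its established lemmas, while yours is more self-contained and yields the slightly sharper fact that $\beta$ becomes invertible after applying $\tm$ alone.
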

\begin{proof}
By Proposition \ref{PropCokThree}, the morphism $H(f)$ is epimorphic.

Take a coreflection triangle
\[
\xy
(-24,0)*+{V}="0";
(-8,0)*+{S}="2";
(8,0)*+{C}="4";
(28,0)*+{V[1]}="6";
(18,-12)*+{T}="8";
(18,-5)*+{_{\circlearrowright}}="10";
{\ar^{} "0";"2"};
{\ar^{k} "2";"4"};
{\ar^{} "4";"6"};
{\ar_{} "4";"8"};
{\ar_{} "8";"6"};
\endxy
\]
satisfying $S\in\Scal$. If we complete $s=-e\ci k[-1]$ into a distinguished triangle
\[ S[-1]\ov{s}{\lra}A\ov{d}{\lra}D\to S, \]
then by the octahedron axiom, we obtain a commutative diagram
\[
\xy
(-16,16)*+{S[-1]}="0";
(-15,-1)*+{C[-1]}="2";
(-14,-19)*+{V}="4";
(-4,2)*+{A}="6";
(2.3,-5.3)*+{D}="8";
(17,8.5)*+{B}="10";
(-8.5,-7.5)*+_{_{\circlearrowright}}="12";
(-11.5,5.5)*+_{_{\circlearrowright}}="14";
(4.3,0.5)*+_{_{\circlearrowright}}="14";
{\ar_{-k[-1]} "0";"2"};
{\ar_{} "2";"4"};
{\ar^{s} "0";"6"};
{\ar_(0.56){e} "2";"6"};
{\ar_{d} "6";"8"};
{\ar^{f} "6";"10"};
{\ar_{} "4";"8"};
{\ar_{h} "8";"10"};
\endxy
\]
in which
\begin{eqnarray*}
&S[-1]\to A\to D\to S,\quad S[-1]\to C[-1]\to V\to S,&\\
&C[-1]\to A\to B\to C,\quad V\to D\to B\to V[1]&
\end{eqnarray*}
are distinguished triangles.

Applying Proposition \ref{PropCokOne} and its dual to the triangles
\begin{eqnarray*}
&S[-1]\ov{s}{\lra}A\ov{d}{\lra}D\to S,&\\
&S[-1]\ov{-k[-1]}{\lra}C[-1]\to V\to S,&\\
&V\to D\ov{h}{\lra}B\to V[1],&
\end{eqnarray*}
we obtain exact sequences
\begin{eqnarray*}
&H(S[-1])\ov{H(s)}{\lra}H(A)\ov{H(d)}{\lra}H(D)\to 0\quad \exact,&\\
&H(S[-1])\ov{-H(k[-1])}{\lra}H(C[-1])\to 0\quad\exact,&\\
&0\to H(D)\ov{H(h)}{\lra}H(B)\quad\exact.&
\end{eqnarray*}
These form a commutative diagram
\[
\xy
(-24,14)*+{H(S[-1])}="0";
(-24,0)*+{H(C[-1])}="2";
(-24,-12)*+{0}="4";
(0,-13)*+{0}="5";
(0,0)*+{H(A)}="6";
(12.5,-8)*+{H(D)}="8";
(29,0)*+{H(B)}="10";
(23.5,-14)*+{0}="12";
{\ar_{-H(k[-1])} "0";"2"};
{\ar_{} "2";"4"};
{\ar^{H(s)} "0";"6"};
{\ar_(0.56){H(e)} "2";"6"};
{\ar_{H(d)} "6";"8"};
{\ar^{H(f)} "6";"10"};
{\ar_{} "5";"8"};
{\ar_{H(h)} "8";"10"};
{\ar_{} "8";"12"};
%
(-16,4)*+_{_{\circlearrowright}}="24";
(13,-3)*+_{_{\circlearrowright}}="26";
\endxy,
\]
which implies the exactness of
\[ H(C[-1])\ov{H(e)}{\lra}H(A)\ov{H(f)}{\lra}H(B). \]
Together with the epimorphicity of $H(f)$, we obtain the conclusion.
\end{proof}

Corollary \ref{CorCok} gives a sufficient condition for Problem \ref{ProbTwo}.
\begin{cor}\label{CorProT}
Let $\Pcal=\STUV$ and $\Pcal\ppr=\STUVp$ be twin cotorsion pairs on $\C$. If they satisfy the conditions
\[ H\ppr(\Ucal)=H\ppr(\Tcal)=0 \]
and
\[ \Scal\se\Scal\ppr\ast\Vcal\ppr [1],\quad \Vcal\se\Scal\ppr [-1]\ast\Vcal\ppr, \]
then $E=\Hbarp\ci\iota\co\HW\to\HWp$ satisfies $E\ci H\cong H\ppr$.
\end{cor}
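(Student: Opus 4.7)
The plan is to adapt the proof of Proposition~\ref{PropSingle}, replacing the cohomological long exact sequence used there (which was available because $H\ppr$ was exact in the degenerate case) by Corollary~\ref{CorCok}, whose hypotheses are precisely what condition~(B) guarantees. For each $A\in\C$, choose a coreflection triangle
\[V_A\to K_A\ov{k_A}{\lra}A\to V_A[1]\]
with $V_A\in\Vcal$, and a reflection triangle
\[S_{K_A}[-1]\to K_A\ov{z_{K_A}}{\lra}Z_{K_A}\to S_{K_A}\]
with $S_{K_A}\in\Scal$, so that $Z_{K_A}\in\Hcal$ and $EH(A)=H\ppr(Z_{K_A})$. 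The candidate natural isomorphism is $\eta_A:=H\ppr(z_{K_A})\ci H\ppr(k_A)\iv\co H\ppr(A)\ov{\cong}{\lra}EH(A)$, provided both $H\ppr(k_A)$ and $H\ppr(z_{K_A})$ are shown to be isomorphisms in $\HWp$.

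The key preliminary step is to establish the two vanishings $H\ppr(V[1])=0$ for $V\in\Vcal$ and $H\ppr(S[-1])=0$ for $S\in\Scal$. For the first: hypothesis~(B) provides a distinguished triangle $S'[-1]\to V\to V'\to S'$ with $S'\in\Scal\ppr\se\Ucal\ppr$ and $V'\in\Vcal\ppr$; shifting and rotating yields the triangle $V'\to S'\to V[1]\to V'[1]$, which (together with the $\Tcal\ppr$-factorization data supplied by $\C=\Scal\ppr\ast\Tcal\ppr[1]$ and the octahedron) exhibits a coreflection triangle for $V[1]$ with respect to $\Pcal\ppr$ whose middle term $S'$ lies in $\Ucal\ppr\se\Cmp$. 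Hence $\tmp(V[1])\in\Ucal\ppr/\Wcal\ppr$, and Proposition~\ref{RemHTauU} gives $H\ppr(V[1])=0$. Dually, for $S\in\Scal$ hypothesis~(B) yields $S'\to S\to V'[1]\to S'[1]$, which shifts to the reflection triangle $S'[-1]\to S[-1]\to V'\to S'$ for $S[-1]$ whose middle term $V'\in\Vcal\ppr\se\Tcal\ppr\se\Cpp$, so $\tpp(S[-1])\in\Tcal\ppr/\Wcal\ppr$ and $H\ppr(S[-1])=0$.

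With these vanishings, Corollary~\ref{CorCok} (applied with respect to $\Pcal\ppr$) to the triangle $V_A\to K_A\to A\to V_A[1]$---whose distinguished object $V_A[1]$ lies in $\Scal\ppr\ast\Vcal\ppr[1]$ by shifting hypothesis~(B) and has $H\ppr(V_A[1])=0$ by the above---produces the cokernel sequence $H\ppr(V_A)\to H\ppr(K_A)\to H\ppr(A)\to 0$. Since $V_A\in\Vcal\se\Tcal$ yields $H\ppr(V_A)=0$, this cokernel collapses and $H\ppr(k_A)$ is an isomorphism. An entirely analogous application to $S_{K_A}[-1]\to K_A\to Z_{K_A}\to S_{K_A}$, using $S_{K_A}\in\Scal\se\Ucal$ together with hypothesis~(B) to verify the Corollary's hypotheses and the vanishing $H\ppr(S_{K_A}[-1])=0$ to collapse the resulting cokernel, shows $H\ppr(z_{K_A})$ is also an isomorphism. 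Naturality of $\eta=\{\eta_A\}_{A\in\C}$ then follows by the standard diagram chase, using the functoriality of reflection and coreflection triangles (Remark~\ref{RemFunctRefl}) to lift any $f\co A\to B$ to compatible morphisms $g\co K_A\to K_B$ and $h\co Z_{K_A}\to Z_{K_B}$, and assembling the resulting commutative squares in $\HWp$ exactly as in the final step of the proof of Proposition~\ref{PropSingle}. The main subtlety is verifying that the candidate triangles constructed in the preliminary step really do qualify as coreflection/reflection triangles in the formal sense of the paper; this reduces to extracting the required ``$T\in\Tcal\ppr$'' or ``$U\in\Ucal\ppr$'' octahedron data from the decompositions $\C=\Ucal\ppr\ast\Vcal\ppr[1]=\Scal\ppr\ast\Tcal\ppr[1]$.
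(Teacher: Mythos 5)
Your overall plan—adapting the proof of Proposition~\ref{PropSingle} and using Corollary~\ref{CorCok} in place of the cohomological long exact sequence—is the right strategy, and the final assembly of the naturality square is fine. However, there is a genuine gap in the preliminary step, and the detour through that step is in fact unnecessary.

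The issue is the claimed vanishings $H\ppr(V[1])=0$ for $V\in\Vcal$ and $H\ppr(S[-1])=0$ for $S\in\Scal$. Your argument rotates the triangle $S'[-1]\to V\to V'\to S'$ supplied by hypothesis to $V'\to S'\to V[1]\to V'[1]$ and asserts this ``exhibits a coreflection triangle'' for $V[1]$. But a coreflection triangle in the sense of the paper requires, in addition to $V'\in\Vcal\ppr$ and $S'\in\Cmp$, the commuting factorization $V[1]\to T'\to V'[1]$ with $T'\in\Tcal\ppr$. This extra datum does \emph{not} come for free: writing $S_1[-1]\to V[1]\to T_1\to S_1$ using $\C=\Scal\ppr[-1]\ast\Tcal\ppr$, the obstruction to factoring $V[1]\to V'[1]$ through $T_1$ lives in $\C(S_1[-1],V'[1])\cong\C(S_1,V'[2])$, which the twin cotorsion condition $\Exto(\Scal\ppr,\Vcal\ppr)=0$ does not control. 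So a distinguished triangle with the right end-terms need not upgrade to a coreflection triangle, and Proposition~\ref{RemCokThree} cannot be invoked. Consequently $\tmp(V[1])\in\Ucal\ppr/\Wcal\ppr$, and hence $H\ppr(V[1])=0$, is not established.

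This gap is forced by your choice to read the coreflection triangle $V_A\to K_A\to A\to V_A[1]$ in the form $C[-1]\to\bullet\to\bullet\to C$ with $C=V_A[1]$, so that Corollary~\ref{CorCok} demands $H\ppr(V_A[1])=0$. The paper instead applies the \emph{dual} (kernel) form of Corollary~\ref{CorCok} directly to that triangle, which only needs $H\ppr(V_A)=0$ (immediate from $V_A\in\Vcal\se\Tcal$) and $V_A\in\Scal\ppr[-1]\ast\Vcal\ppr$ (hypothesis~(B)), yielding $0\to H\ppr(K_A)\to H\ppr(A)\to H\ppr(V_A[1])$; the last map is then zero because $A\to V_A[1]$ factors through $T_A\in\Tcal$. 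Analogously, on the reflection side the first map $H\ppr(S_A[-1])\to H\ppr(A)$ is killed not by a vanishing of $H\ppr(S_A[-1])$ but by its factorization through $U_A\in\Ucal$. Replacing your preliminary vanishing step with these two observations closes the gap and recovers the paper's proof.
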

\begin{proof}
For each object $A\in\C$, choose a reflection triangle
\begin{equation}\label{ReflTriALas}
\xy
(-28,0)*+{S_A[-1]}="0";
(-8,0)*+{A}="2";
(8,0)*+{Z_A}="4";
(24,0)*+{S_A}="6";
(-18,-12)*+{U_A}="8";
(-18,-5)*+{_{\circlearrowright}}="10";
{\ar^{s_A} "0";"2"};
{\ar^{z_A} "2";"4"};
{\ar_{} "4";"6"};
{\ar_{} "0";"8"};
{\ar_{} "8";"2"};
\endxy
\end{equation}
and a coreflection triangle
\[
\xy
(-24,0)*+{V_A}="0";
(-8,0)*+{K_A}="2";
(8,0)*+{A}="4";
(28,0)*+{V_A[1]}="6";
(18,-12)*+{T_A}="8";
(18,-5)*+{_{\circlearrowright}}="10";
{\ar^{} "0";"2"};
{\ar^{k_A} "2";"4"};
{\ar^{} "4";"6"};
{\ar_{} "4";"8"};
{\ar_{} "8";"6"};
\endxy.
\]
Similarly as in the proof of Proposition \ref{PropSingle}, it suffices to show that $H\ppr(z_A)$ and $H\ppr(k_A)$ become isomorphisms in $\HWp$.

By assumption, $S_A$ satisfies $H\ppr(S_A)=0$ and $S_A\in\Scal\ppr\ast\Vcal\ppr [1]$. Thus, applying Corollary \ref{CorCok} to $(\ref{ReflTriALas})$, we obtain a cokernel sequence
\[ H\ppr(S_A[-1])\ov{H\ppr(s_A)}{\lra}H\ppr(A)\ov{H\ppr(z_A)}{\lra}H\ppr(Z_A)\to 0. \]
Since $H\ppr(s_A)$ factors through $H\ppr(U_A)=0$, this means $H\ppr(z_A)$ is an isomorphism.

Dually, we have a kernel sequence
\[ 0\to H\ppr(K_A)\ov{H\ppr(k_A)}{\lra}H\ppr(A)\ov{0}{\lra}H\ppr(V_A[1]), \]
which means $H\ppr(k_A)$ is an isomorphism.
\end{proof}

If we rephrase this in the setting of Problem \ref{ProbOne}, we obtain the following.
\begin{thm}\label{MainThm}
Let $\Pcal=\STUV$ and $\Pcal\ppr=\STUVp$ be twin cotorsion pairs on $\C$ and $\C\ppr$. Let $F\co\C\ov{\simeq}{\lra}\C\ppr$ be a triangle equivalence. If $\Pcal$ and $\Pcal\ppr$ satisfy the conditions
\begin{itemize}
\item[{\rm (A)}] \ $H\ppr F(\Ucal)=H\ppr F(\Tcal)=0$, \ $HF\iv(\Ucal\ppr)=HF\iv(\Tcal\ppr)=0$,
\item[{\rm (B)}] \ $F(\Scal)\se\Scal\ppr\ast\Vcal\ppr [1]$, \ $F(\Vcal)\se\Scal\ppr [-1]\ast\Vcal\ppr$,
\item[{\rm (C)}] \ $F\iv(\Scal\ppr)\se\Scal\ast\Vcal [1]$, \ $F\iv(\Vcal\ppr)\se\Scal [-1]\ast\Vcal$,
\end{itemize}
then $\Pcal$ is heart-equivalent to $\Pcal\ppr$ along $F$.
\end{thm}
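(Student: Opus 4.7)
The plan is to reduce the theorem directly to Corollary \ref{CorProT}, which has done almost all of the real work. First, I would use Remark \ref{ClaimOne} to replace $\Pcal$ with $F(\Pcal) = ((F(\Scal),F(\Tcal)),(F(\Ucal),F(\Vcal)))$, a twin cotorsion pair on $\C\ppr$, and work on $\C = \C\ppr$ with $F = \Id$. Under this reduction, hypotheses (A), (B), (C) translate literally to
\[ H\ppr(\Scal) = H\ppr(\Tcal) = 0, \quad H(\Scal\ppr) = H(\Tcal\ppr) = 0, \]
\[ \Scal \se \Scal\ppr \ast \Vcal\ppr[1], \quad \Vcal \se \Scal\ppr[-1] \ast \Vcal\ppr, \]
\[ \Scal\ppr \se \Scal \ast \Vcal[1], \quad \Vcal\ppr \se \Scal[-1] \ast \Vcal, \]
since $F(\Ucal) \supseteq F(\Scal)$ and $F(\Tcal) \supseteq F(\Vcal)$ (and vanishing of $H\ppr F$ on $\Ucal, \Tcal$ in particular kills $\Scal, \Vcal$).

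Next, I would invoke Corollary \ref{CorProT} in the form already proved: the vanishing $H\ppr(\Ucal) = H\ppr(\Tcal) = 0$ together with $\Scal \se \Scal\ppr \ast \Vcal\ppr[1]$ and $\Vcal \se \Scal\ppr[-1] \ast \Vcal\ppr$ yields a natural isomorphism $E \ci H \cong H\ppr$, where $E = \Hbarp \ci \iota \co \HW \to \HWp$. By the symmetric application of Corollary \ref{CorProT}, exchanging the roles of $\Pcal$ and $\Pcal\ppr$, the vanishing $H(\Ucal\ppr) = H(\Tcal\ppr) = 0$ together with (C) yields a natural isomorphism $E\ppr \ci H\ppr \cong H$, where $E\ppr = \Hbar \ci \iota\ppr \co \HWp \to \HW$. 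Thus condition (a) from the discussion preceding Lemma \ref{ClaimThree} is established for the canonical pair $(E, E\ppr)$.

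Finally, I would invoke Lemma \ref{ClaimThree}: its hypotheses $H\ppr(\Wcal) = 0$ and $H(\Wcal\ppr) = 0$ hold since $\Wcal \se \Tcal$ and $\Wcal\ppr \se \Tcal\ppr$, so we are in its scope, and (a) implies (b), giving $E \ci E\ppr \cong \Id$ and $E\ppr \ci E \cong \Id$. Hence $E$ is an equivalence of categories that makes the square in Definition \ref{DefHeartEquiv} commute up to natural isomorphism (after the initial reduction $F = \Id$), so $\Pcal$ is heart-equivalent to $\Pcal\ppr$ along $F$. The main obstacle is really no obstacle at this stage; all the substantive work sits inside Corollary \ref{CorCok} and its deduction Corollary \ref{CorProT}, and what remains here is the bookkeeping of the reduction (Remark \ref{ClaimOne}) and the formal upgrade from (a) to (b) (Lemma \ref{ClaimThree}).
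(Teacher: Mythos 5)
Your proposal is correct and follows essentially the same route as the paper: the paper's own proof is just the reduction of Problem \ref{ProbOne} to Problem \ref{ProbTwo} via Remark \ref{ClaimOne}, Remark \ref{ClaimTwo} and Lemma \ref{ClaimThree}, followed by the two symmetric applications of Corollary \ref{CorProT} that you spell out. (Only a cosmetic slip: after the reduction, condition (A) reads $H\ppr(\Ucal)=H\ppr(\Tcal)=0$ and $H(\Ucal\ppr)=H(\Tcal\ppr)=0$, not merely the vanishing on $\Scal$ and $\Scal\ppr$ displayed in your translation, but you invoke the correct form when citing Corollary \ref{CorProT}, so nothing is affected.)
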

\begin{proof}
This follows immediately from Corollary \ref{CorProT}.
\end{proof}

\section{An application}

In the rest, we assume the following.
\begin{assumption}
$\ $
\begin{enumerate}
\item $\C$ is $k$-linear for some field $k$, and has a Serre functor $\Sbf$.
\item $\Dcal\se\C$ is a functorially finite rigid subcategory, closed under isomorphisms, finite direct sums and summands.
\item $\UVp$ is a cotorsion pair satisfying $\Dcal\se\Ucal\ppr$.
\item $\Ucal=\mu\iv(\Ucal\ppr ;\Dcal)=(\Dcal\ast\Ucal\ppr [1])\cap {}\ppp\Dcal [1]$.
\item $\Ucal\ppr=\mu(\Ucal ;\Dcal)=(\Ucal [-1]\ast\Dcal)\cap \Dcal [-1]\ppp$.
\item $({}\ppp\Ucal [1],\Ucal)$ is a cotorsion pair.
\end{enumerate}
And, additionally,
\begin{enumerate}
\setcounter{enumi}{6}
\item $\Ucal\se\Dcal [-1]\ppp$.
\item $\Ucal\ppr\se {}\ppp\Dcal [1]$.
\end{enumerate}
Remark that {\rm (7)} and {\rm (8)} are automatically satisfied if $\C$ is 2-Calabi-Yau.
\end{assumption}

\begin{prop}\label{PropPropLast}
Put $F=\Sbf\ci [-2]\co\C\ov{\simeq}{\lra}\C$. With the above assumption, the twin cotorsion pair $(({}\ppp\Ucal [1],\Ucal),({}\ppp\Dcal [1],\Dcal))$ is heart-equivalent to $((\Dcal,\Dcal [-1]\ppp),(\Ucal\ppr,\Vcal\ppr))$ along $F$.
\end{prop}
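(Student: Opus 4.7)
The plan is to apply Theorem \ref{MainThm}, so I need to verify conditions (A), (B), (C) for the triangle equivalence $F=\Sbf\ci[-2]$. The essential tool throughout is Serre duality in its Ext-form $\Exto(X,Y)\cong D\Exto(Y,FX)$, from which I will first extract the identities $F({}\ppp\Mcal[1])=\Mcal[-1]\ppp$ and the adjoint swap $\Exto(\Mcal,X)=0\LR\Exto(F\iv X,\Mcal)=0$, valid for any subcategory $\Mcal\se\C$. These convert all questions about the image of $F$ into hom-vanishings on the original side.

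Condition (A) and the first half of (C) then follow formally. The definition of $\Ucal$ in (4) gives $\Ucal\se{}\ppp\Dcal[1]$, hence $F(\Ucal)\se F({}\ppp\Dcal[1])=\Dcal[-1]\ppp=\Tcal\ppr$ and similarly $F({}\ppp\Dcal[1])=\Dcal[-1]\ppp$, both killed by $H\ppr$; dually (5) gives $\Ucal\ppr\se\Dcal[-1]\ppp$, so $F\iv(\Ucal\ppr)$ and $F\iv(\Dcal[-1]\ppp)$ lie in ${}\ppp\Dcal[1]$ and are killed by $H$. For the first half of (C), the Serre swap applied to $\Exto(\Ucal,\Dcal)=0$ (which is $\Ucal\se{}\ppp\Dcal[1]$) yields $\Exto(F\iv\Dcal,\Ucal)=0$, so $F\iv(\Dcal)\se{}\ppp\Ucal[1]=\Scal\se\Scal\ast\Vcal[1]$ via the trivial decomposition.

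The first half of (B), $F(\Scal)=\Ucal[-1]\ppp\se\Dcal\ast\Vcal\ppr[1]$, is the main technical step. Given $X\in\Ucal[-1]\ppp$, I decompose it via the cotorsion pair $(\Dcal,\Dcal[-1]\ppp)$ (valid because $\Dcal$ is functorially finite rigid) as a triangle $D\to X\to Y[1]\to D[1]$ in which $D\to X$ is a right $\Dcal$-approximation. To verify $Y\in\Vcal\ppr$, take any $U'\in\Ucal\ppr$ and decompose it via (5) as $U[-1]\to U'\to D_0\to U$; the vanishing $\Exto(U,X)=0$ forces every morphism $U'\to X$ to factor through $D_0\in\Dcal$ and hence through $D\to X$, and combined with $\Exto(\Ucal\ppr,D)=0$ from (8) a diagram chase in the long exact sequence obtained from $\C(\Ucal\ppr,-)$ yields $\Exto(\Ucal\ppr,Y)=0$. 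The second half of (C), $F\iv(\Vcal\ppr)\se{}\ppp\Ucal\ast\Dcal$, is then proved by a completely symmetric argument, using the shift $\C={}\ppp\Dcal\ast\Dcal$ of the cotorsion pair $({}\ppp\Dcal[1],\Dcal)$, together with $\Exto(\Ucal\ppr,\Vcal\ppr)=0$ and (7) in the form $\Exto(\Dcal,\Ucal)=0$ replacing $\Exto(U,X)=0$ and (8).

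The main obstacle is the second half of (B), $F(\Dcal)\se\Dcal[-1]\ast\Vcal\ppr$, where the direct parallel to the argument above breaks down: the relevant lift is controlled via Serre by $\C(\Dcal,\Ucal)$, which need not vanish in general. I expect to handle it by working with the shifted decomposition $\C=\Dcal[-1]\ast\Dcal[-1]\ppp$ of $FD$ as $D_1[-1]\to FD\to W\to D_1$ and exploiting the extra Serre-inputs $\Exto(\Ucal,FD)\cong D\Exto(D,\Ucal)=0$ (from (7)) and $F(\Dcal)\se\Dcal[-1]\ppp$ (from rigidity of $\Dcal$ via Serre), possibly aided by an octahedron built from the exchange triangles of Marsh--Palu. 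Once this is established, Theorem \ref{MainThm} yields the required heart-equivalence.
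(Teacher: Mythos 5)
Your overall route is the paper's: verify conditions {\rm (A)}, {\rm (B)}, {\rm (C)} of Theorem \ref{MainThm} for $F=\Sbf\ci[-2]$ using Serre duality and the triangles provided by assumptions {\rm (4)}, {\rm (5)}, {\rm (7)}, {\rm (8)}. Your treatment of {\rm (A)}, of $F\iv(\Dcal)\se{}\ppp\Ucal[1]$, of the first half of {\rm (B)} and of the second half of {\rm (C)} is correct; for $\Ucal[-1]\ppp\se\Dcal\ast\Vcal\ppr[1]$ you decompose $X$ directly by the cotorsion pair $(\Dcal,\Dcal[-1]\ppp)$ and use the right-$\Dcal$-approximation property, where the paper instead combines the $(\Ucal\ppr,\Vcal\ppr)$-decomposition of $X$ with the {\rm (5)}-decomposition of $U\ppr_X$ via an octahedron; both arguments use the same inputs (condition {\rm (8)} and $\C(\Ucal[-1],X)=0$) and are equally valid. (For the symmetric argument proving ${}\ppp\Ucal\ppr[1]\se{}\ppp\Ucal\ast\Dcal$ you should also state explicitly that the test object $U\in\Ucal$ is decomposed by {\rm (4)}, dually to your use of {\rm (5)}; with that, your sketch goes through.)

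The genuine gap is the second half of {\rm (B)}, $F(\Dcal)\se\Dcal[-1]\ast\Vcal\ppr$, which you flag as the main obstacle and leave unproved, and the inputs you propose do not suffice. With your decomposition $D_1[-1]\to FD\to W\to D_1$ $(D_1\in\Dcal,\ W\in\Dcal[-1]\ppp)$, what must be shown is $\Exto(\Ucal\ppr,W)=0$; by {\rm (8)} (which kills $\Exto(\Ucal\ppr,D_1)$) this reduces to $\Exto(\Ucal\ppr,FD)=0$, a vanishing against $\Ucal\ppr$ that neither $\Exto(\Ucal,FD)=0$ (your input from {\rm (7)}) nor $F(\Dcal)\se\Dcal[-1]\ppp$ (rigidity) addresses. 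In fact this half of {\rm (B)} is the easiest of all: by Serre duality and assumption {\rm (5)} (the containment $\Ucal\ppr\se\Dcal[-1]\ppp$),
\[
\C(\Ucal\ppr[-1],F(\Dcal))=\C(\Ucal\ppr[-1],\Sbf(\Dcal[-2]))\cong\C(\Dcal[-2],\Ucal\ppr[-1])^{\vee}\cong\C(\Dcal[-1],\Ucal\ppr)^{\vee}=0,
\]
i.e.\ $\Exto(\Ucal\ppr,F(\Dcal))=0$, so $F(\Dcal)\se\Vcal\ppr\se\Dcal[-1]\ast\Vcal\ppr$ with no decomposition, octahedron or exchange triangle needed; this is exactly the paper's one-line argument. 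So the obstacle you anticipate dissolves, but as written your proposal does not establish this condition, and hence the hypotheses of Theorem \ref{MainThm} are not fully verified.
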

This recovers the following result in \cite{MP}.
\begin{fact}
(Theorem 2.9 in \cite{MP})

Assume $\C$ is a Krull-Schmidt $k$-linear $\Hom$-finite triangulated category with a Serre functor. Let $T$ be a basic rigid object, and let
\[ T=\ovl{T}\oplus R,\qquad T\ppr=\ovl{T}\oplus R^{\ast} \]
be direct sums in $\C$, where $R$ and $R^{\ast}$ are related by a distinguished triangle
\[ R^{\ast}\to B\to R\to R^{\ast}[1] \]
in which $B\in\add\ovl{T}$, and $B\to R$ is a minimal right $\add\ovl{T}$-approximation. Then there is an equivalence
\[ \big((\add T)\ast(\add\ovl{T}[1])\big)/\add T\, \simeq\,  \big((\add \ovl{T}[-1])\ast(\add T\ppr)\big)/\add T\ppr. \]

Indeed, this equivalence follows from Proposition \ref{PropPropLast}, if we put $\Dcal=\add\overline{T},\, \Ucal=\add T$ and $\Ucal\ppr=\add T\ppr$.
\end{fact}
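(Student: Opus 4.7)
The plan is to identify $\Dcal = \add\overline{T}$, $\Ucal = \add T$, and $\Ucal\ppr = \add T\ppr$, verify the eight items of the Assumption of this section, identify the hearts of the two twin cotorsion pairs with the ideal quotients appearing in the statement, and then invoke Proposition~\ref{PropPropLast}.

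Items (1) and (2) are immediate from the Marsh-Palu hypotheses: $\Dcal = \add\overline{T}$ is closed under isomorphisms, finite direct sums and summands, rigid because $\overline{T}$ is a summand of the rigid object $T$, and functorially finite as the additive closure of a basic object in a Krull-Schmidt Hom-finite category with Serre functor. For items (7) and (8), the required vanishings reduce, after decomposing $T = \overline{T}\oplus R$ and $T\ppr = \overline{T}\oplus R^*$, to $\C(\overline{T}, R[1]) = 0$, which is part of rigidity of $T$, and $\C(R^*, \overline{T}[1]) = 0$. For the latter I would use the dual mutation triangle $R \to B\ppr \to R^* \to R[1]$ with $R \to B\ppr$ a minimal left $\add\overline{T}$-approximation (whose existence follows from functorial finiteness of $\add\overline{T}$ via the Serre functor); applying $\C(-, \overline{T})$ produces an exact piece $\C(B\ppr, \overline{T}) \to \C(R, \overline{T}) \to \C(R^*, \overline{T}[1]) \to \C(B\ppr, \overline{T}[1])$, in which the first map is surjective by the left approximation property and the last term vanishes by rigidity, forcing the middle term to vanish.

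For (3) and (6), the cotorsion pairs $(\add T\ppr, \Vcal\ppr)$ and $({}\ppp\add T[1], \add T)$ follow from the standard fact that a rigid functorially finite subcategory in such a $\C$ gives rise to such a cotorsion pair; for the former I first check rigidity of $T\ppr$ by decomposing $\Ext^1(T\ppr, T\ppr)$ and vanishing the three non-trivial components via $\C(-,-)$ applied to $R^*\to B\to R \to R^*[1]$, using the approximation property of $B\to R$ together with rigidity of $T$. For (4) and (5), the inclusions $\add T \se (\add\overline{T}\ast\add T\ppr[1])\cap{}\ppp(\add\overline{T}[1])$ and $\add T\ppr \se (\add T[-1]\ast\add\overline{T})\cap(\add\overline{T}[-1])\ppp$ follow directly from the defining triangle and the rigidity already established. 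The reverse inclusions are the main obstacle, and are where the \emph{minimality} of $B\to R$ enters: given $X$ in the right-hand side, I would take its defining triangle, decompose the $\add T\ppr$-term using $T\ppr = \overline{T}\oplus R^*$, splice via the octahedron axiom with $R^*\to B\to R\to R^*[1]$ to express $X$ (modulo an $\add\overline{T}$-summand) as a direct sum of copies of $R$, and finally invoke Krull-Schmidt together with the $\perp$-condition to conclude $X\in\add T$.

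With the Assumption verified, Proposition~\ref{PropPropLast} provides a heart-equivalence along $F = \Sbf\ci[-2]$, and it remains to identify the two hearts with the stated ideal quotients. For the first twin cotorsion pair $(({}\ppp\add T[1], \add T),({}\ppp\add\overline{T}[1], \add\overline{T}))$, rigidity immediately gives $\Wcal = \add T\cap{}\ppp(\add\overline{T}[1]) = \add T$, so $\Cp = \add T\ast\add\overline{T}[1]$. To see $\Hcal = \Cp$, I would show $\Cp\se\Cm$: for arbitrary $X\in\C$, a left $\add T$-approximation $u\co X\to T_1$ completes to a distinguished triangle $X\to T_1\to Z\to X[1]$, and applying $\C(-,\add T)$ together with the approximation property of $u$ and rigidity of $T$ yields $\C(Z,\add T[1]) = 0$, hence $Z[-1]\in{}\ppp\add T = \Scal[-1]$ and $X\in\Scal[-1]\ast\Wcal = \Cm$. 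Thus $\Hcal = \Cp$ and the first heart is $(\add T\ast\add\overline{T}[1])/\add T$. The dual computation on the mutated side identifies the second heart as $(\add\overline{T}[-1]\ast\add T\ppr)/\add T\ppr$, yielding the Marsh-Palu equivalence.
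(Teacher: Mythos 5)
Your overall route is the same as the paper's: the paper's entire justification of this Fact is its closing sentence, namely specialize Proposition \ref{PropPropLast} to $\Dcal=\add\ovl{T}$, $\Ucal=\add T$, $\Ucal\ppr=\add T\ppr$, with the verification of the Assumption and the identification of the two hearts left implicit (to be supplied by \cite{MP}). Much of what you add is correct and genuinely useful: items (1), (2), (6), (7), the forward inclusions in (4), (5), and the identification of the hearts, which the paper never spells out, are all fine. Indeed $\Wcal=\add T$ and your left $\add T$-approximation argument gives $\Cm=\C$, so $\Hcal=\Cp=\add T\ast\add\ovl{T}[1]$; dually $\Wcal\ppr=\add T\ppr$ (this only needs $\Ext^1(\ovl{T},R^{\ast})=0$, which your right-approximation argument does give) and $\Cpp=\C$ by the cotorsion pair of item (3), so $\Hcal\ppr=\Cmp=\add\ovl{T}[-1]\ast\add T\ppr$.

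There is, however, a genuine gap at the vanishing $\C(R^{\ast},\ovl{T}[1])=0$, on which item (8), the rigidity of $T\ppr$, hence item (3) and your route to the reverse inclusions in (4), (5), all depend. You derive it from a second exchange triangle $R\to B\ppr\to R^{\ast}\to R[1]$ with $R\to B\ppr$ a minimal left $\add\ovl{T}$-approximation, asserting its existence ``via the Serre functor''. Covariant finiteness only yields some triangle $R\to B\ppr\to C\to R[1]$; the claim that its cone $C$ is the same object $R^{\ast}$ defined by the given right-approximation triangle is exactly the nontrivial exchange-triangle statement, and it fails in general: in $D^b(kA_2)$ for the quiver $1\to 2$, with $\ovl{T}=P_1$ (the $2$-dimensional projective) and $R=P_2=S_2$, one has $\C(P_1,P_2)=0$, so $R^{\ast}=P_2[-1]$, whereas the minimal left $\add P_1$-approximation $P_2\hookrightarrow P_1$ has cone $S_1\not\cong P_2[-1]$. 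Worse, the vanishing itself is not a consequence of the hypotheses as quoted: taking $\ovl{T}=S_2[-1]$ and $R=S_1$ in the same category, $T=\ovl{T}\oplus R$ is basic rigid, the minimal right $\add\ovl{T}$-approximation of $R$ is $0\to R$, so $R^{\ast}=S_1[-1]$, and $\C(R^{\ast},\ovl{T}[1])\cong\Ext^1(S_1,S_2)\neq 0$. So this step cannot be repaired from the quoted data alone: you must either import the exchange-triangle and rigidity results of \cite{MP} (which is what the paper implicitly does by treating the statement as a citation and only claiming the recovery), or work in the 2-Calabi--Yau case, where the paper notes (7), (8) are automatic and where $\Ext^1(R^{\ast},\ovl{T})\cong\Ext^1(\ovl{T},R^{\ast})^{\vee}=0$ follows from the part you did prove. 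Even granting rigidity of $T\ppr$, note that the equalities in (4), (5) are substantive results whose proof your octahedron/Krull--Schmidt sketch only outlines.
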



\begin{proof} ({\it of Proposition \ref{PropPropLast}.})
We confirm the conditions {\rm (A),(B),(C)} in Theorem \ref{MainThm}.
Obviously, we have
\begin{eqnarray*}
&F(\Ucal)\se F({}\ppp\Dcal [1])=\Sbf({}\ppp\Dcal [-1])=\Dcal [-1]\ppp,&\\
&F\iv(\Ucal\ppr)\se F\iv(\Dcal [-1]\ppp)=\Sbf\iv(\Dcal [1]\ppp)={}\ppp\Dcal [1],&
\end{eqnarray*}
and thus {\rm (A)} is satisfied.
It remains to show the following.
\begin{itemize}
\item[{\rm (B1)}] \ $F({}\ppp\Ucal [1])\se\Dcal\ast\Vcal\ppr [1]$.
\item[{\rm (B2)}] \ $F(\Dcal)\se\Dcal [-1]\ast\Vcal\ppr$.
\item[{\rm (C1)}] \ $F\iv(\Dcal)\se {}\ppp\Ucal [1]\ast\Dcal [1]$.
\item[{\rm (C2)}] \ $F\iv(\Vcal\ppr)\se {}\ppp\Ucal\ast\Dcal$.
\end{itemize}

\medskip

\noindent {\bf [Confirmation of {\rm (B1)}]} This requires condition {\rm (8)}. Since $F({}\ppp\Ucal [1])=\Ucal [-1]\ppp$, it is equivalent to show $\Ucal [-1]\ppp\se\Dcal\ast\Vcal\ppr [1]$.

For any $X\in\Ucal [-1]\ppp$, decompose it into a distinguished triangle
\[ V_X\ppr\to U_X\ppr\to X\to V_X\ppr [1]\quad(U_X\ppr\in\Ucal\ppr,V_X\ppr\in\Vcal\ppr) \]
and then, $U_X\ppr$ into
\[ U[-1]\to U_X\ppr\to D\to U\quad(U\in\Ucal,D\in\Dcal). \]
By $X\in\Ucal [-1]\ppp$, we obtain a diagram
\[
\xy
(-16.2,16)*+{U_X\ppr}="0";
(-14.8,-1)*+{D}="2";
(-13.5,-17.5)*+{U}="4";
(-4,2)*+{X}="6";
(2,-5.5)*+{V_X\ppr [1]}="8";
(16.7,8.5)*+{P}="10";
(-8.5,-7.5)*+_{_{\circlearrowright}}="12";
(-11.5,5.5)*+_{_{\circlearrowright}}="14";
(4.3,0.5)*+_{_{\circlearrowright}}="14";
{\ar_{} "0";"2"};
{\ar_{} "2";"4"};
{\ar^{} "0";"6"};
{\ar_{} "2";"6"};
{\ar_{} "6";"8"};
{\ar^{} "6";"10"};
{\ar_{} "4";"8"};
{\ar_{} "8";"10"};
\endxy
\]
in which
\begin{eqnarray*}
&U\ppr_X\to D\to U\to U\ppr_X[1],\quad U\ppr_X\to X\to V\ppr_X[1]\to U\ppr_X[1],&\\
&U\to V\ppr_X[1]\to P\to U[1],\quad D\to X\to P\to D[1]&
\end{eqnarray*}
are distinguished triangles.
Then for any $U\ppr\in\Ucal\ppr$ and any $u\ppr\in\C(U\ppr,P)$, the diagram
\[
\xy
(8,14)*+{U\ppr}="0";
(-8,0)*+{V_X\ppr [1]}="2";
(8,0)*+{P}="4";
(28,0)*+{U[1]}="6";
(18,-12)*+{D[1]}="8";
(18,-5)*+{_{\circlearrowright}}="10";
{\ar^{u\ppr} "0";"4"};
{\ar^{} "2";"4"};
{\ar^{} "4";"6"};
{\ar_{} "4";"8"};
{\ar_{} "8";"6"};
\endxy
\]
and the condition $\Ucal\ppr\se {}\ppp\Dcal [1]$ together with $\C(U\ppr,V\ppr_X[1])=0$ show $u\ppr=0$. Namely $P$ belongs to $\Ucal^{\prime\perp}=\Vcal\ppr [1]$, and thus it follows $X\in\Dcal\ast\Vcal\ppr [1]$.

\medskip

\noindent {\bf [Confirmation of {\rm (B2)}]}
We have
\begin{eqnarray*}
\C(\Ucal\ppr [-1],F(\Dcal))&=&\C(\Ucal\ppr [-1],\Sbf(\Dcal [-2]))%
\ \cong\ \C(\Dcal [-2],\Ucal\ppr [-1])^{\vee}\\
&\cong&\C(\Dcal [-1],\Ucal\ppr)^{\vee}\ =\ 0.
\end{eqnarray*}
This means $F(\Dcal)\se\Vcal\ppr$. Here, ${(-)}^{\vee}$ denotes the $k$-dual.

\medskip

\noindent {\bf [Confirmation of {\rm (C1)}]}
We have
\begin{eqnarray*}
\C(F\iv(\Dcal),\Ucal [1])&\cong&\C(\Dcal [2],\Sbf(\Ucal [1]))
\ \cong\ \C(\Ucal [1],\Dcal [2])^{\vee}\\
&\cong&\C(\Ucal,\Dcal [1])^{\vee}\ =\ 0.
\end{eqnarray*}
This means $F\iv(\Dcal)\se {}\ppp\Ucal [1]$.

\medskip

\noindent {\bf [Confirmation of {\rm (C2)}]}
This requires condition {\rm (7)}. Since $F\iv(\Vcal\ppr)=F\iv(\Ucal\ppr [-1]\ppp)={}\ppp\Ucal\ppr [1]$, it is equivalent to show ${}\ppp\Ucal\ppr [1]\se {}\ppp\Ucal\ast\Dcal$.

For any $X\in\ppp\Ucal\ppr [1]$, decompose it into a distinguished triangle
\[ Q_X\to X\to U_X\to Q_X[1]\quad(Q_X\in {}\ppp\Ucal,U_X\in\Ucal) \]
and then, $U_X$ into
\[ D\to U_X\to U\ppr [1]\to D[1]\quad(D\in \Dcal,U\ppr\in\Ucal\ppr). \]
By $X\in {}\ppp\Ucal\ppr [1]$, we obtain a diagram
\[
\xy
(-20,16)*+{R}="0";
(0.5,15)*+{Q_X}="2";
(17,14)*+{U\ppr}="4";
(-2,4)*+{X}="6";
(6.1,-0.6)*+{D}="8";
(-5.8,-14.2)*+{U_X}="10";
(7.5,8.5)*+_{_{\circlearrowright}}="12";
(-5.5,11.5)*+_{_{\circlearrowright}}="14";
(-0.5,-4.3)*+_{_{\circlearrowright}}="14";
{\ar^{} "0";"2"};
{\ar^{} "2";"4"};
{\ar_{} "0";"6"};
{\ar^{} "2";"6"};
{\ar^{} "6";"8"};
{\ar_{} "6";"10"};
{\ar^{} "4";"8"};
{\ar^{} "8";"10"};
\endxy
\]
in which\begin{eqnarray*}
&R\to Q_X\to U\ppr\to R[1],\quad R\to X\to D\to R[1],&\\
&Q_X\to X\to U_X\to Q_X[1],\quad U\ppr\to D\to U_X\to U\ppr [1]&
\end{eqnarray*}
are distinguished triangles.
Then for any $U\in\Ucal$ and any $u\in\C(R,U)$ the diagram
\[
\xy
(-18,12)*+{D[-1]}="0";
(-28,0)*+{U\ppr [-1]}="2";
(-8,0)*+{R}="4";
(8,0)*+{Q_X}="6";
(-8,-14)*+{U}="8";
(-18,5)*+{_{\circlearrowright}}="10";
{\ar_{} "0";"4"};
{\ar_{} "2";"0"};
{\ar^{} "2";"4"};
{\ar^{} "4";"6"};
{\ar^{u} "4";"8"};
\endxy
\]
and the condition $\Ucal\se\Dcal [-1]\ppp$ shows $u=0$. Namely $R$ belongs to ${}\ppp\Ucal$, and thus we have $X\in {}\ppp\Ucal\ast\Dcal$.
\end{proof}

\section*{Acknowledgement}
This article has been written when the author was staying at LAMFA, l'Universit\'{e} de Picardie-Jules Verne, by the support of JSPS Postdoctoral Fellowships for Research Abroad. He wishes to thank the hospitality of Professor Serge Bouc, Professor Radu Stancu and the members of LAMFA.

\end{document}